\documentclass{elsarticle}

\usepackage{amsmath}
\usepackage{amsfonts}
\usepackage{amssymb}
\usepackage{amsthm}
\usepackage{graphicx}
\usepackage{enumitem}
\usepackage{hyperref}
\usepackage{microtype}

\newcommand{\D}{\mathcal{D}}
\newcommand{\A}{\mathcal{A}}
\newcommand{\e}{\epsilon}
\newcommand{\de}{\delta}
\newcommand{\ff}{\varphi}
\newcommand{\spn}{\operatorname{span}}
\newcommand{\la}{\lambda}
\def \<{\left\langle}
\def \>{\right\rangle}

\newtheorem{Theorem}{Theorem}[section]
\newtheorem{Lemma}{Lemma}[section]

\newtheorem{Proposition}{Proposition}[section]
\newtheorem*{Remark}{Remark}

\newtheorem{Corollary}{Corollary}[section]
\numberwithin{equation}{section}

\journal{Applied and Computational Harmonic Analysis}

\begin{document}

\title{Biorthogonal Greedy Algorithms in Convex Optimization}

\author[1]{A.\,V.~Dereventsov\corref{cor}}
\ead{dereventsov@gmail.com}
\author[2]{V.\,N.~Temlyakov}
\ead{temlyakovv@gmail.com}

\cortext[cor]{Corresponding author}
\address[1]{Oak Ridge National Laboratory}
\address[2]{University of South Carolina, Steklov Institute of Mathematics, and Lomonosov Moscow State University}

\date{}

\begin{abstract}
The study of greedy approximation in the context of convex optimization is becoming a promising research direction as greedy algorithms are actively being employed to construct sparse minimizers for convex functions with respect to given sets of elements.
In this paper we propose a unified way of analyzing a certain kind of greedy-type algorithms for the minimization of convex functions on Banach spaces.
Specifically, we define the class of Weak Biorthogonal Greedy Algorithms for convex optimization that contains a wide range of greedy algorithms.
We analyze the introduced class of algorithms and establish the properties of convergence, rate of convergence, and numerical stability, which is understood in the sense that the steps of the algorithm are allowed to be performed not precisely but with controlled computational inaccuracies.
We show that the following well-known algorithms for convex optimization~--- the Weak Chebyshev Greedy Algorithm (co) and the Weak Greedy Algorithm with Free Relaxation (co)~--- belong to this class, and introduce a new algorithm~--- the Rescaled Weak Relaxed Greedy Algorithm (co).
Presented numerical experiments demonstrate the practical performance of the aforementioned greedy algorithms in the setting of convex minimization as compared to optimization with regularization, which is the conventional approach of constructing sparse minimizers.
\end{abstract}

\begin{keyword}
greedy algorithm \sep convex optimization \sep sparsity \sep Biorthogonal Greedy Algorithm \sep Banach space
\end{keyword}

\maketitle

\section{Introduction}\label{sec:intro}
This paper is devoted to the question of building an approximate sparse solution of a given convex optimization problem.
A typical setting is to find an approximate minimum of a real-valued convex function $E$ defined on the Banach space $X$, i.e.
\begin{equation}\label{eq:opt}
	\text{find}\ \ x^* = \mathop{\operatorname{argmin}}_{x \in X} E(x).
\end{equation}
When optimization is performed over the whole space $X$, it is called an {\it unconstrained optimization problem}.
Usually in practice it is desirable to obtain a minimizer that possesses a certain structure or belongs to a given domain $S \subset X$, in which case problem~\eqref{eq:opt} becomes a {\it constrained optimization problem}.

In particular, it is often preferable that the constructed solution $x^*$ is sparse with respect to a given set of elements $\D \subset X$.
For instance, such a problem is natural in the context of reduced order modeling, see e.g.~\cite{binev2011convergence}, \cite{buffa2012priori}, \cite{dereventsov2019natural}, \cite{rozza2007reduced}, where it is required to construct a set of elements $\mathcal{V}$, called a reduced basis, that approximates the high-dimensional manifold $\mathcal{D} \subset X$ of interest with respect to a given metric $E : X \to \mathbb{R}$, which is often a norm of $X$ but may be changed to fit the particular application.
One approach to construct a reduced basis $\mathcal{V}$ is to select a set of elements $\{g_1, \ldots, g_n\} \in \mathcal{F}$ such that any other element $f \in \mathcal{F}$ can be sufficiently well approximated by the set $\{g_1, \ldots, g_n\}$.
Since the quality of the approximation is measured by the given metric $E$, one essentially has to solve the sparse minimization problem $\min E(f - \sum_{g\in\mathcal{F}} c_g g)$ for each $f \in \mathcal{F}$ and then select $\mathcal{V} := \{g_1, \ldots, g_n\}$ as the most frequent members of the solutions.

A conventional approach to solve a task of obtaining an approximate sparse minimizer is to impose an additional $\ell_1$-regularization on the original problem (see e.g.~\cite{FNW}) and, instead of~\eqref{eq:opt}, solve the problem
\begin{equation}\label{eq:opt_reg}
	\text{find}\ \ x^* = \mathop{\operatorname{argmin}}_{x \in X} \big( E(x) + \lambda \|x\|_\D \big),
\end{equation}
where $\lambda > 0$ is an appropriate regularization parameter and $\|\cdot\|_\D$ is the atomic norm with respect to the set $\D$ (see e.g.~\cite{chandrasekaran2012convex}), i.e.
\begin{equation}\label{eq:D_norm}
	\|x\|_\D := \inf \left\{ \sum_{g \in \D} |c_g| : x = \sum_{g \in \D} c_g \, g \right\}.
\end{equation}
Even though such an approach is quite popular, it might not always result in the most appropriate solution since, as with any regularization, it essentially changes the target function in order to promote sparsity of the solution.
While such a formulation is equivalent to the constrained minimization problem, the restriction on the atomic norm acts as only a proxy for the solution sparsity and therefore does not necessarily guarantee the desired outcome.
Moreover, one has to select the value of the introduced regularization coefficient $\lambda$, which in practice is typically performed heuristically and often has to be tweaked to achieve a satisfactory performance.

Another way of obtaining a sparse minimizer (without changing the optimization problem) is to procedurally construct a sequence of minimizers with an increasing support, or, more generally, to design an algorithm that after $m$ iterations provides a point $x_m$ such that $E(x_m)$ is close to the $\inf_{x \in S} E(x)$ and that $x_m$ is $m$-sparse with respect to $\D$, i.e.
\[
	x_m = \sum_{j=1}^m c_j \, g_j
	\ \ \text{with}\ \ 
	g_1, \ldots, g_m \in \D
	\ \ \text{and}\ \ 
	c_1, \ldots, c_m \in \mathbb{R}.
\]

A wide class of algorithms that fit such requirements is the class of greedy algorithms in approximation theory, see e.g.~\cite{D}, \cite{VTbook}.
A typical problem of greedy approximation is the following.
Let $X$ be a Banach space with the norm $\|\cdot\|$ and let $\D$ be a dictionary, i.e. a dense set of semi-normalized elements of $X$.
The goal of a greedy algorithm is, for a given element $f \in X$, to obtain a sparse approximation with respect to the dictionary $\D$.
Greedy algorithms are iterative by design and generally after $m$ iterations a greedy algorithm provides an $m$-term linear combination of the elements from the dictionary $\D$ that approximates the given element $f$.

It is easy to reframe a greedy approximation problem as a problem of convex function optimization.
Indeed, for a given dictionary $\D$ consider the set of all $m$-term linear combinations with respect to $\D$ ($m$-sparse with respect to $\D$ elements):
\[
	\Sigma_m(\D) := \left\{ x \in X: x = \sum_{j=1}^m c_j \, g_j, \ \ g_1, \ldots, g_m \in \D \right\}.
\]
Greedy algorithms in approximation theory are designed to provide a simple way to build good approximants of $f$ from $\Sigma_m(\D)$, hence the problem of greedy approximation is the following:
\begin{equation}\label{eq:opt_ga}
	\text{find}\ \ x_m = \mathop{\operatorname{argmin}}_{x \in \Sigma_m} \|f - x\|.
\end{equation}
Clearly, problem~\eqref{eq:opt_ga} is a constrained optimization problem of the real-valued convex function $E(x) := \|f - x\|$ over the manifold $\Sigma_m(\D) \subset X$.

At first glance the settings of approximation and optimization problems appear to be very different since in approximation theory our task is to find a sparse approximation of a given element $f \in X$, while in optimization theory we want to find an approximate sparse minimizer of a given target function $E : X \to \mathbb{R}$ (for instance, energy function or loss function).
However it is now well understood that similar techniques can be used for solving both problems.
Namely, it was shown in~\cite{VT140} and in follow up papers (see, for instance, \cite{DT}, \cite{GP}, \cite{NP}, \cite{VT141}, and~\cite{VT148}) how methods developed in nonlinear approximation theory (greedy approximation techniques in particular) can be adjusted to find an approximate sparse (with respect to a given dictionary $\D$) solution to the optimization problem~\eqref{eq:opt}.
Moreover, there is an increasing interest in building such sparse approximate solutions using different greedy-type algorithms, for example, \cite{BD1}, \cite{BD2}, \cite{chandrasekaran2012convex}, \cite{Cl}, \cite{Ja2}, \cite{JS}, \cite{SSZ}, \cite{TRD}, and~\cite{Z}.

With an established framework it is straightforward to adjust a greedy strategy to a context of convex optimization; however, each of these modified techniques requires an individual analysis to guarantee a desirable performance.
On the other hand, it is known that the behavior of a greedy method is largely determined by the underlying geometry of the problem setting.
In particular, in~\cite{VT165} we present a unified way of analyzing different greedy-type algorithms in Banach spaces.
Specifically, we define the class of Weak Biorthogonal Greedy Algorithms ($\mathcal{WBGA}$) and prove convergence and rate of convergence results for algorithms from this class.
Such an approach allows for a simultaneous analysis of a wide range of seemingly different greedy algorithms based on the smoothness characteristic of the problem.

In this paper we adopt the approach of unified analysis for the setting of convex minimization.
In Section~\ref{sec:wbga} we adjust the class $\mathcal{WBGA}$ of algorithms designed for greedy approximation in Banach spaces and derive the class of Weak Biorthogonal Greedy Algorithms for convex optimization ($\mathcal{WBGA}$(co)), which consists of greedy algorithms designed for convex optimization.
We prove convergence and rate of convergence results for algorithms from the class $\mathcal{WBGA}$(co) in Theorems~\ref{thm:wbga_conv} and~\ref{thm:wbga_rate} respectively.
Thus, results in Section~\ref{sec:wbga} address two important characteristics of an algorithm~--- convergence and rate of convergence.

The rate of convergence is an essential characteristic of an algorithm, though in certain practical applications resistance to various perturbations might be of equal importance.
A systematic study of the stability of greedy algorithms in Banach spaces was started in~\cite{T7} and further advanced in~\cite{De}, where necessary and sufficient conditions for the convergence of a certain algorithm were obtained.
A transition to the optimization setting was performed in~\cite{DT} and~\cite{VT148}, where stability results for greedy-type algorithms for convex optimization were obtained.
In Section~\ref{sec:awbga} we discuss the stability of the algorithms from the class $\mathcal{WBGA}$(co) by analyzing the convergence properties of the algorithms from $\mathcal{WBGA}$(co) under the assumption of imprecise calculations in the steps of the algorithms.
We call such algorithms {\it approximate greedy algorithms} or {\it algorithms with errors}.
We prove convergence and rate of convergence results for the Weak Biorthogonal Greedy Algorithms with errors, which describes the stability of the algorithms from the class $\mathcal{WBGA}$(co)~--- an important characteristic that is crucial for practical implementation.

Since theoretical analysis cannot always predict the practical behavior of an algorithm, it is of interest to observe its actual implementation for particular problems.
In Section~\ref{sec:numerics} we demonstrate the performance of some algorithms from the class $\mathcal{WBGA}$(co) by employing them to solve various minimization problems.
Additionally, we compare these algorithms with a conventional method of obtaining sparse minimizers~--- optimization with $\ell_1$-regularization~\eqref{eq:opt_reg}.
Lastly, in Sections~\ref{sec:proofs_wbga} and~\ref{sec:proofs_awbga} we prove the results stated in Sections~\ref{sec:wbga} and~\ref{sec:awbga} respectively.

\section{Weak Biorthogonal Greedy Algorithms for Convex Optimization}\label{sec:wbga}
In this section we introduce and discuss the class of Weak Biorthogonal Greedy Algorithms for optimization of convex objective functions, denoted as $\mathcal{WBGA}$(co).
We begin by recalling the relevant terminology.

\subsection{Preliminaries}
Let $X$ be a real Banach space with the norm $\|\cdot\|$.
We say that a set of elements $\D$ from $X$ is a dictionary if each $g \in \D$ has the norm bounded by one and $\D$ is dense in $X$, that is
\[
	\|g\| \le 1
	\ \ \text{for any}\ \ 
	g \in \D,
	\ \ \text{and}\ \
	\overline{\spn\D} = X.
\]
For notational convenience in this paper we consider {\it symmetric dictionaries}, i.e. such that
\[
	g\in \D \ \ \text{implies} \ \ -g \in \D.
\]
We denote the closure (in $X$) of the convex hull of $\D$ by $\A_1(\D)$:
\begin{equation}\label{eq:A_1(D)}
	\A_1(\D) := \overline{\mathrm{conv} \D},
\end{equation}
which is the standard notation in relevant greedy approximation literature. 

Throughout the paper we assume that the target function $E$ is Fr{\'e}chet-differentiable, i.e. that at any $x \in X$ there is a bounded linear functional $E'(x) : X \to \mathbb{R}$ such that
\[
	\sup_{\|y\|=1} \Big( \lim_{u \to 0} \frac{E(x + uy) - E(x)}{u} - \< E'(x), y \> \Big) = 0.
\]
Then the convexity of $E$ implies that for any $x,y \in D$
\begin{equation}\label{eq:E'_conv1}
	E(y) \ge E(x) + \<E'(x),y-x\>,
\end{equation}
or, equivalently,
\begin{equation}\label{eq:E'_conv2}
	E(x) - E(y) \le \<E'(x),x - y\> = \<-E'(x),y-x\>.
\end{equation}

\begin{Remark}
We note that the condition of Fr{\'e}chet-differentiability is not necessary for the formulation of greedy algorithms and can be relaxed by considering support functionals in place of the derivative of $E$, as is done in~\cite[Chapter~5]{dereventsov2017convergence}.
Although the existence of support functionals is guaranteed by the convexity of the target function, to the best of our knowledge, no nontrivial results regarding the rate of convergence of greedy algorithms can be stated, hence we additionally impose the assumption of Fr{\'e}chet-differentiability.
\end{Remark}

The modulus of smoothness $\rho(E,S,u)$ of a function $E : X \to \mathbb{R}$ on a set $S \subset X$ is defined as
\begin{equation}\label{eq:mod_smt}
	\rho(E,S,u) := \frac{1}{2} \sup_{x\in S, \|y\|=1} \Big| E(x + uy) + E(x - uy) - 2E(x) \Big|.
\end{equation}
We note that, in comparison to the modulus of smoothness of a norm (see, for instance,~\cite[Part~3]{beauzamy2011introduction}), the modulus of smoothness of a function additionally depends on the chosen set $S \subset X$.
That is because a norm is a positive homogeneous function, thus its smoothness on the whole space is determined by its smoothness on the unit sphere, which is not the case for a general function on a Banach space.

The function $E$ is uniformly smooth on $S \subset X$ if $\rho(E,S,u) = o(u)$ as $u \to 0$.
We say that the modulus of smoothness $\rho(E,S,u)$ is of power type $1 \leq q \leq 2$ if $\rho(E,S,u) \leq \gamma u^q$ for some $\gamma > 0$.
Note that the class of functions with the modulus of smoothness of a nontrivial power type is completely different from the class of uniformly smooth Banach spaces with the norms of a nontrivial power type since any uniformly smooth norm is not uniformly smooth as a function on any set containing $0$.
However, it is shown in~\cite{borwein2009uniformly} that if a norm $\|\cdot\|$ has the modulus of smoothness of power type $q \in [1,2]$, then the function $E(\cdot) := \|\cdot\|^q$ has the modulus of smoothness $\rho(E,S,u)$ of power type $q$ for any set $S \subset X$.
In particular, it implies (see e.g.~\cite[Lemma B.1]{donahue1997rates}) that for any $1 \le p < \infty$ the function $E : L_p \to \mathbb{R}$ defined as
\[
	E_p(x) = \|x\|_{L_p}^p
\]
has the modulus of smoothness that satisfies
\[
\rho_p(E,X,u) \le
	\left\{\begin{array}{ll}
		\frac{1}{p} u^p & 1 \le p \le 2,
		\\
		\frac{p-1}{2} u^2 & 2 \le p < \infty,
	\end{array}\right.
\]
i.e. $\rho_p(E,X,u)$ is of power type $\min\{p,2\}$.

A typical smoothness assumption in convex optimization is of the form
\[
	|E(x + uy) - E(x) - \<E'(x),uy\>| \le Cu^2
\]
with some constant $C > 0$ and any number $u \in \mathbb{R}$ and elements $x,y \in X$ with $\|y\| = 1$.
In terms of the modulus of smoothness~\eqref{eq:mod_smt} such an assumption corresponds to the case $\rho(E,X,u) \le Cu^2 / 2$, i.e. that the modulus of smoothness of $E$ is of power type $2$.

\smallskip\noindent
{\bf Assumptions.}
In this work we assume that the target function $E : X \to \mathbb{R}$ is convex and Fr{\'e}chet-differentiable, and that the set
\[
	D = D(E) := \big\{ x \in X : E(x) \le E(0) \big\} \subset X
\]
is bounded.
Note that in this case we have
\[
	\inf_{x \in X} E(x) = \inf_{x \in D} E(x) > -\infty
	\ \ \text{and}\ \
	\mathop{\operatorname{argmin}}_{x \in X} E(x) = \mathop{\operatorname{argmin}}_{x \in D} E(x) \in D,
\]
i.e. the minimizer of $E$ exists and is unique and attainable.

\subsection{Weak Biorthogonal Greedy Algorithms}
Typically in greedy approximation one has to perform a greedy selection from a given dictionary $\D$, which might not always be possible.
In order to guarantee the feasibility of algorithms, it is conventional to perform a {\it weak} greedy step where the greedy search is relaxed.
Such relaxations are represented by a given sequence $\tau := \{t_m\}_{m=1}^\infty$, referred to as a {\it weakness sequence}.

For a convex Fr{\'e}chet-differentiable target function $E : X \to \mathbb{R}$ we define the following class of greedy algorithms.
\\[.5em]\noindent
{\bf Weak Biorthogonal Greedy Algorithms ($\boldsymbol{\mathcal{WBGA}}$(co)).\\}
We say that an algorithm belongs to the class $\mathcal{WBGA}$(co) with a weakness sequence $\tau = \{t_m\}_{m=1}^\infty$, $t_m\in[0,1]$, if sequences of approximators $\{G_m\}_{m=0}^\infty$ and selected elements $\{\varphi_m\}_{m=1}^\infty$ of the dictionary $\D$ satisfy the following conditions at every iteration $m \ge 1$:
\begin{enumerate}[label=\bf(\arabic*), leftmargin=.5in]
	\item\label{wbga_gs}
		Greedy selection: ${\displaystyle \<-E'(G_{m-1}), \varphi_m\> \ge t_m \sup_{\varphi\in\D} \< -E'(G_{m-1}), \varphi \>}$;
	\item\label{wbga_er}
		Error reduction: ${\displaystyle E(G_m) \le \inf_{\la\ge0} E(G_{m-1} + \la\ff_m)}$;
	\item\label{wbga_bo}
		Biorthogonality: ${\displaystyle \<E'(G_{m}), G_m\> = 0}$.
\end{enumerate}

Due to the imposed Assumptions, there exists a nontrivial and attainable minimum of the objective function $E$.
Then by condition~\ref{wbga_er} the sequence of $m$-sparse approximants $\{G_m\}_{m=0}^\infty$ constructed by an algorithm from the $\mathcal{WBGA}$(co) satisfies the relation
\[
	E(0) = E(G_0) \ge E(G_1) \ge E(G_2) \ge \dots,
\]
which guarantees that $G_m \in D$ for all $m \ge 0$.

\begin{Remark}
In the case $E(x) := \|f - x\|^q$ with any $f\in X$ and $q \ge 1$, the class $\mathcal{WBGA}$(co) coincides with the class $\mathcal{WBGA}$ from the approximation theory, which is introduced and analyzed in~\cite{VT165}.
\end{Remark}

\subsection{Examples of algorithms from the $\mathcal{WBGA}$(co)}\label{sec:wbga_ga}
In this section we briefly overview a few particular algorithms from the class $\mathcal{WBGA}$(co) that will be utilized in the numerical experiments presented in Section~\ref{sec:numerics}.
By $\tau := \{t_m\}_{m=1}^\infty$ we denote a weakness sequence, i.e. a given sequence of non-negative numbers $t_m \le 1$, $m = 1,2,3,\dots$.

We first define the Weak Chebyshev Greedy Algorithm for optimization of convex objective functions that is introduced and studied in~\cite{VT140}.
\\[.5em]\noindent
{\bf Weak Chebyshev Greedy Algorithm (WCGA(co)).\\}
Set $G^c_0 = 0$ and for each $m \ge 1$ perform the following steps:
\begin{enumerate}
	\item Take any $\varphi^{c}_m \in \D$ satisfying
		\[
			\< -E'(G^c_{m-1}), \varphi^c_m \> \ge t_m \sup_{\varphi\in\D} \< -E'(G^c_{m-1}), \varphi \>;
		\]
	\item Denote $\Phi_m^c = \spn \{\varphi^c_k\}_{k=1}^m$ and find $G_m^c \in \Phi_m^c$ such that
		\[
			E(G_m^c) = \inf_{G \in \Phi_m^c} E(G).
		\]
\end{enumerate}

Another algorithm, which utilizes a simpler approach to updating the approximant is the Weak Greedy Algorithm with Free Relaxation for optimization of convex objective functions (see~\cite{VT140}).
\\[.5em]\noindent
{\bf Weak Greedy Algorithm with Free Relaxation (WGAFR(co)).\\}
Set $G^f_0 = 0$ and for each $m \ge 1$ perform the following steps:
\begin{enumerate}[label=\bf(\arabic*), leftmargin=.5in]
	\item Take any $\varphi^f_m \in \D$ satisfying
		\[
			\< -E'(G^f_{m-1}), \varphi^f_m \> \ge t_m \sup_{\varphi\in\D} \< -E'(G^f_{m-1}), \varphi \>;
		\]
	\item Find $\omega_m \in \mathbb{R}$ and $ \lambda_m \in \mathbb{R}$ such that
		\[
			E\big( (1-\omega_m) G^f_{m-1} + \lambda_m \varphi^f_m \big)
			= \inf_{ \la, \omega \in \mathbb{R}} E\big( (1 - \omega) G^f_{m-1} + \la \varphi^f_m \big)
		\]
		and define $G^f_m = (1 - \omega_m) G^f_{m-1} + \la_m \varphi^f_m$.
\end{enumerate}

The next algorithm~--- the Rescaled Weak Relaxed Greedy Algorithm for optimization of convex objective functions~--- is an adaptation of its counterpart from the approximation theory (see~\cite{VT165}) that can be viewed as a generalization of the Rescaled Pure Greedy Algorithm, introduced in~\cite{Pet} and adapted for convex optimization in~\cite{GP}.
\\[.5em]\noindent
{\bf Rescaled Weak Relaxed Greedy Algorithm (RWRGA(co)).\\}
Set $G^r_0 = 0$ and for each $m \ge 1$ perform the following steps:
\begin{enumerate}
	\item Take any $\varphi^r_m \in \D$ satisfying
		\[
			\< -E'(G^r_{m-1}), \varphi^r_m \> \ge t_m \sup_{\varphi\in\D} \< -E'(G^r_{m-1}), \varphi \>;
		\]
	\item Find $\lambda_m \ge 0$ such that
		\[
			E(G^r_{m-1} + \la_m \varphi^r_m) = \inf_{\la \ge 0} E(G^r_{m-1} + \la \varphi^r_m);
		\]
	\item Find $\mu_m \in \mathbb{R}$ such that
		\[
			E\big( \mu_m (G^r_{m-1} + \la_m \varphi^r_m) \big)
			= \inf_{\mu \in \mathbb{R}} E\big( \mu (G^r_{m-1} + \la_m \varphi^r_m) \big)
		\]
		and define $G^r_m = \mu_m (G^r_{m-1} + \la_m \varphi^r_m)$.
\end{enumerate}

\begin{Proposition}\label{prp:ga_wbga}
The WCGA(co), the WGAFR(co), and the RWRGA(co) belong to the class $\mathcal{WBGA}$(co).
\end{Proposition}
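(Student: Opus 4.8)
The plan is to verify, for each of the three algorithms, the three defining conditions \ref{wbga_gs}, \ref{wbga_er}, \ref{wbga_bo} of the class $\mathcal{WBGA}$(co) directly from the definitions; the common initialization $G_0 = 0$ matches the convention used for the class. Condition \ref{wbga_gs} needs no work, since the first step of each of the WCGA(co), WGAFR(co), and RWRGA(co) is verbatim the greedy selection \ref{wbga_gs}, with $\varphi_m$ taken to be $\varphi_m^c$, $\varphi_m^f$, and $\varphi_m^r$ respectively. Before handling \ref{wbga_er} and \ref{wbga_bo} I would first record an auxiliary fact used repeatedly: every infimum occurring in the definitions of the three algorithms is attained. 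Each such infimum is that of the convex, continuous (indeed Fr\'echet-differentiable) function $E$ over a finite-dimensional affine set $A$ through the previous iterate; on $A$ the sublevel set $\{z \in A : E(z) \le E(0)\}$ is nonempty and contained in the bounded set $D = D(E)$, hence compact, so the minimum is attained. This makes the algorithms well defined and justifies the first-order optimality conditions below.

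Next I would check the error-reduction condition \ref{wbga_er}. In each algorithm $G_m$ minimizes $E$ over a set that contains the ray $\{G_{m-1} + \la\varphi_m : \la \ge 0\}$, or is obtained from a minimizer over such a ray by a rescaling that does not increase $E$. For the WCGA(co), $\Phi_m^c = \spn\{\varphi_k^c\}_{k=1}^m$ contains $G_{m-1}^c + \la\varphi_m^c$ for every $\la \in \mathbb{R}$ (as $G_{m-1}^c \in \Phi_{m-1}^c \subseteq \Phi_m^c$), so $E(G_m^c) = \inf_{G \in \Phi_m^c} E(G) \le \inf_{\la \ge 0} E(G_{m-1}^c + \la\varphi_m^c)$. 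For the WGAFR(co), restricting its defining minimization to $\omega = 0$ gives $E(G_m^f) \le \inf_{\la \in \mathbb{R}} E(G_{m-1}^f + \la\varphi_m^f) \le \inf_{\la \ge 0} E(G_{m-1}^f + \la\varphi_m^f)$. For the RWRGA(co), taking $\mu = 1$ in step~3 gives $E(G_m^r) \le E(G_{m-1}^r + \la_m\varphi_m^r)$, which by the choice of $\la_m$ in step~2 equals $\inf_{\la \ge 0} E(G_{m-1}^r + \la\varphi_m^r)$.

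The biorthogonality condition \ref{wbga_bo} is where a single, slightly less obvious observation does the work in all three cases: the line $\{\mu G_m : \mu \in \mathbb{R}\}$ lies inside the set over which $E$ was minimized to produce $G_m$, so $\mu \mapsto E(\mu G_m)$ attains its minimum at $\mu = 1$, whence $\<E'(G_m), G_m\> = \tfrac{d}{d\mu} E(\mu G_m)\big|_{\mu=1} = 0$. For the WCGA(co) this is transparent: $G_m^c \in \Phi_m^c$ and $\Phi_m^c$ is a linear subspace, so $(1+t) G_m^c \in \Phi_m^c$ for all $t$. For the RWRGA(co) it is built into step~3: with $y := G_{m-1}^r + \la_m\varphi_m^r$ and $G_m^r = \mu_m y$, minimality of $\mu \mapsto E(\mu y)$ at $\mu = \mu_m$ gives $\<E'(\mu_m y), y\> = 0$, and multiplying by $\mu_m$ yields $\<E'(G_m^r), G_m^r\> = 0$ (the case $\mu_m = 0$ being trivial). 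For the WGAFR(co) one invokes the algebraic identity
\[
	\mu G_m^f = \big(1 - (1 - \mu(1-\omega_m))\big) G_{m-1}^f + (\mu\la_m)\,\varphi_m^f,
\]
which displays $\mu G_m^f$ in the admissible form $(1-\omega) G_{m-1}^f + \la\varphi_m^f$ for every $\mu \in \mathbb{R}$; hence $\mu \mapsto E(\mu G_m^f)$ is minimized at $\mu = 1$ and $\<E'(G_m^f), G_m^f\> = 0$.

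I expect the main obstacle to be exactly this last point for the WGAFR(co): unlike the RWRGA(co), that algorithm has no explicit rescaling step, so \ref{wbga_bo} does not follow immediately from its defining minimization, and one must notice that the two-parameter family $\{(1-\omega) G_{m-1}^f + \la\varphi_m^f : \omega, \la \in \mathbb{R}\}$ is already invariant under the dilation $z \mapsto \mu z$. Once that is seen, the rest is routine unwinding of the definitions together with the attainment remark from the first paragraph.
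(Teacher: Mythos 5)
Your proof is correct and takes essentially the same approach as the paper: conditions~\ref{wbga_gs} and~\ref{wbga_er} are verified directly from the definitions, and the biorthogonality condition~\ref{wbga_bo} is extracted from first-order optimality of $G_m$ over a set containing the dilation line $\{\mu G_m : \mu \in \mathbb{R}\}$, the key point for the WGAFR(co) being exactly your observation that the two-parameter family $(1-\omega)G^f_{m-1} + \la\varphi^f_m$ is the span of $G^f_{m-1}$ and $\varphi^f_m$. The only difference is that the paper invokes Lemma~\ref{lem:E'_L=0} (vanishing of $\<E'(x_L),\cdot\>$ on a finite-dimensional subspace $L$ at its minimizer $x_L$) whereas you differentiate $\mu \mapsto E(\mu G_m)$ at $\mu = 1$ directly, which is the one-dimensional instance of the same fact and, for the RWRGA(co), uses the line spanned by $G^r_{m-1} + \la_m\varphi^r_m$~--- arguably more accurate than the paper's stated choice $L = \spn\{G^r_{m-1}, \varphi^r_m\}$, over which $G^r_m$ need not be a minimizer.
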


\subsection{Convergence results for the $\mathcal{WBGA}(co)$}
In this section we state the results related to convergence and the rate of convergence for algorithms from the class $\mathcal{WBGA}$(co).

Our setting of an infinite dimensional Banach space makes the formulation of convergence results nontrivial, and thus we require a special sequence which is defined for a given modulus of smoothness $\rho(u) := \rho(E,D,u)$ and a given weakness sequence $\tau = \{t_m\}_{m=1}^\infty$.

Let $E : X \to \mathbb{R}$ be a convex uniformly smooth function, then $\rho(u) := \rho(E,D,u) : \mathbb{R} \to \mathbb{R_+}$ is an even convex function.
Assume that $\rho(u)$ has the property $\rho(1/\theta_0) \ge 1$ for some $\theta_0 \in (0,1]$ and
\[
	\lim_{u\to 0} \rho(u)/u = 0.
\]
Note that assumptions on uniform smoothness of $E$ and boundedness of domain $D \subset X$ guarantee the above properties.
Then for a given $0 < \theta \le \theta_0$ define $\xi_m := \xi_m(\rho,\tau,\theta)$ as the solution of the equation
\begin{equation}\label{eq:theta}
	\rho(u) = \theta t_m u.
\end{equation}
Note that conditions on $\rho(u)$ imply that the function
\[
	s(u) := \left\{\begin{array}{ll}
		\rho(u)/u, & u \neq 0
		\\
		0, & u = 0
	\end{array}\right.
\]
is continuous and increasing on $[0,\infty)$ with $s(1/\theta_0) \ge \theta_0$.
Thus equation~\eqref{eq:theta} has the unique solution $\xi_m = s^{-1}(\theta t_m)$ such that $0 < \xi_m \le 1/\theta_0$.

We now formulate our main convergence result for the $\mathcal{WBGA}$(co).
\begin{Theorem}\label{thm:wbga_conv}
Let $E$ be a uniformly smooth on $D \subset X$ convex function with the modulus of smoothness $\rho(E,D,u)$.
Assume that a sequence $\tau := \{t_m\}_{m=1}^\infty$ satisfies the condition that for any $\theta \in(0,\theta_0]$ we have
\[
	\sum_{m=1}^\infty t_m \xi_m(\rho,\tau,\theta) = \infty.
\]
Then for any algorithm from the class $\mathcal{WBGA}$(co) we have
\[
	\lim_{m\to\infty} E(G_m) = \inf_{x\in D} E(x).
\]
\end{Theorem}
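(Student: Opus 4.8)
The plan is to argue by contradiction, reducing the statement to a single scalar recursion for the errors $a_m := E(G_m) - \inf_{x\in D}E(x)$. By the error reduction property~\ref{wbga_er} together with the fact (recalled before the theorem) that $\inf_X E = \inf_D E$ is finite and attained, the sequence $\{a_m\}_{m=0}^\infty$ is non-negative and non-increasing, hence converges to some $\alpha \ge 0$. I would assume $\alpha > 0$ and derive a contradiction.

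The first key step is a lower bound on the ``greedy quantity'' $\sup_{\ff\in\D}\<-E'(G_{m-1}),\ff\>$ in terms of $a_{m-1}$. Fix $\e>0$ and, using density of $\spn\D$ and continuity of $E$, pick a finite linear combination $f_\e=\sum_i c_i g_i$ with $E(f_\e)\le \inf_D E + \e$; set $A_\e:=\sum_i|c_i|<\infty$, so that symmetry of $\D$ gives $f_\e\in A_\e\,\A_1(\D)$. Convexity in the form~\eqref{eq:E'_conv2} yields $E(G_{m-1})-E(f_\e)\le\<-E'(G_{m-1}),f_\e-G_{m-1}\>$, and biorthogonality~\ref{wbga_bo} removes the term $\<-E'(G_{m-1}),G_{m-1}\>=0$; since the supremum of the functional $\<-E'(G_{m-1}),\cdot\>$ over $\A_1(\D)$ equals its supremum over $\D$, I obtain
\[
\sup_{\ff\in\D}\<-E'(G_{m-1}),\ff\>\ \ge\ \frac{E(G_{m-1})-E(f_\e)}{A_\e}\ \ge\ \frac{a_{m-1}-\e}{A_\e}.
\]
Choosing $\e=\alpha/2$ and using $a_{m-1}\ge\alpha$, this quantity is bounded below by a fixed constant $c:=\alpha/(2A_\e)>0$ for every $m\ge1$.

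The second step is the standard ``error reduction via smoothness'' estimate: for every $\la\ge0$, convexity together with the definition~\eqref{eq:mod_smt} of the modulus of smoothness, applied at $G_{m-1}\in D$ with direction $\ff_m/\|\ff_m\|$ and with monotonicity of $\rho(u):=\rho(E,D,u)$ absorbing the semi-normalization $\|\ff_m\|\le1$, gives $E(G_{m-1}+\la\ff_m)\le E(G_{m-1})+\la\<E'(G_{m-1}),\ff_m\>+2\rho(\la)$. Combining this with~\ref{wbga_er} and with greedy selection~\ref{wbga_gs}, which yields $\<E'(G_{m-1}),\ff_m\>\le -t_m c$, I get the recursion $a_m\le a_{m-1}-\la t_m c+2\rho(\la)$, valid for all $\la\ge0$. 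Now I would take $\theta:=\min\{\theta_0,c/4\}\in(0,\theta_0]$ and $\la=\xi_m=\xi_m(\rho,\tau,\theta)$, so that $\rho(\xi_m)=\theta t_m\xi_m$ (with $\xi_m=0$ in the degenerate cases $t_m=0$ or $\ff_m=0$, where the term simply drops out); since $c-2\theta\ge c/2$, this gives $a_m\le a_{m-1}-\tfrac{c}{2}t_m\xi_m$, and telescoping yields $0\le a_N\le a_0-\tfrac{c}{2}\sum_{m=1}^N t_m\xi_m$. The hypothesis $\sum_m t_m\xi_m(\rho,\tau,\theta)=\infty$ for this particular $\theta$ drives the right-hand side to $-\infty$, the desired contradiction; hence $\alpha=0$, which is the claim.

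I expect the main obstacle to be the first step — the lower bound on the greedy quantity — since it is the only place where the biorthogonality condition~\ref{wbga_bo} is genuinely used, and where one must pass from the minimizer of $E$ to a \emph{finite} dictionary expansion $f_\e$ while keeping $A_\e$ under control. Once this bound is available, steps two and three are routine manipulations with the modulus of smoothness and a telescoping sum, entirely parallel to the approximation-theoretic analysis of the $\mathcal{WBGA}$ in~\cite{VT165}.
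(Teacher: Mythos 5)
Your proposal is correct and follows essentially the same route as the paper: a contradiction argument with $\e=\alpha/2$, a near-minimizer $f^\e$ with finite atomic norm, the one-step estimate combining smoothness, greedy selection, biorthogonality and Lemma~\ref{lem:F_A1(D)} (which the paper packages separately as the Error Reduction Lemma, Lemma~\ref{lem:erl}), and the choice $\theta=\min\{\theta_0,\alpha/(8A)\}$, $\la=\xi_m$, so that divergence of $\sum_m t_m\xi_m$ forces the contradiction. Your inlined derivation and explicit construction of $f^\e$ from density of $\spn\D$ are fine and match the paper's reasoning.
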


Here are two simple corollaries of Theorem~\ref{thm:wbga_conv}.
\begin{Corollary}
Let $E$ be a uniformly smooth on $D \subset X$ convex function.
Then any algorithm from the class $\mathcal{WBGA}$(co) with a constant weakness sequence $\tau = t \in (0,1]$ converges, i.e.
\[
	\lim_{m\to\infty} E(G_m) = \inf_{x\in D} E(x).
\]
\end{Corollary}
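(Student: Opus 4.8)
The plan is to deduce the Corollary directly from Theorem~\ref{thm:wbga_conv} by verifying its hypothesis for a constant weakness sequence $\tau = t \in (0,1]$. The only thing that needs checking is that, for every $\theta \in (0,\theta_0]$, the series $\sum_{m=1}^\infty t_m \xi_m(\rho,\tau,\theta)$ diverges.

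First I would observe that when $t_m = t$ for all $m$, the defining equation~\eqref{eq:theta} for $\xi_m$ becomes $\rho(u) = \theta t u$, whose solution $\xi_m = s^{-1}(\theta t)$ is independent of $m$; call this common value $\xi = \xi(\rho,t,\theta)$. By the discussion preceding the theorem, $s$ is continuous and strictly increasing on $[0,\infty)$ with $s(0)=0$ and $s(1/\theta_0)\ge\theta_0 \ge \theta t$, so the equation $s(u) = \theta t$ has a unique solution, and crucially $\xi > 0$ because $\theta t > 0$ and $s(0) = 0 < \theta t$. Hence each term $t_m\xi_m = t\xi$ is a fixed strictly positive constant.

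It then follows immediately that
\[
	\sum_{m=1}^\infty t_m \xi_m(\rho,\tau,\theta) = \sum_{m=1}^\infty t\,\xi = \infty,
\]
so the hypothesis of Theorem~\ref{thm:wbga_conv} is satisfied for every $\theta \in (0,\theta_0]$. Applying Theorem~\ref{thm:wbga_conv} yields $\lim_{m\to\infty} E(G_m) = \inf_{x\in D} E(x)$, which is the claimed conclusion. (The standing assumptions that $E$ is uniformly smooth on the bounded set $D$ already guarantee, as noted in the text, that $\rho(u)/u \to 0$ and that $\rho(1/\theta_0)\ge 1$ for some $\theta_0\in(0,1]$, so the machinery behind $\xi_m$ is available without extra hypotheses.)

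There is essentially no obstacle here: the entire content of the Corollary is packaged into Theorem~\ref{thm:wbga_conv}, and the only point requiring a moment's care is the strict positivity of $\xi$, i.e.\ that $s^{-1}(\theta t) > 0$ whenever $\theta t > 0$, which is immediate from $s$ being a strictly increasing bijection on $[0,1/\theta_0]$ with $s(0)=0$. Everything else is a one-line summation.
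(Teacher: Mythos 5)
Your proposal is correct and matches the paper's intent exactly: the paper states this as an immediate corollary of Theorem~\ref{thm:wbga_conv}, and the only content is precisely your observation that for a constant weakness sequence $\xi_m = s^{-1}(\theta t)$ is a fixed positive number (as already noted in the text, $0 < \xi_m \le 1/\theta_0$), so the series $\sum_m t_m\xi_m$ diverges. Nothing further is needed.
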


\begin{Corollary}
Let $E$ be a convex function with the modulus of smoothness of power type $1 < q \le 2$, that is, $\rho(E,D,u) \le \gamma u^q$.
Let a sequence $\tau := \{t_m\}_{m=1}^\infty$, $t_m \in (0,1]$ for $m = 1,2,3,\ldots$ be such that 
\[
	\sum_{m=1}^\infty t_m^p = \infty, \ \ p = \frac{q}{q-1}.
\]
Then any algorithm from the class $\mathcal{WBGA}$(co) with the weakness sequence $\tau$ converges, i.e.
\[
	\lim_{m \to \infty} E(G_m) = \inf_{x\in D} E(x).
\]
\end{Corollary}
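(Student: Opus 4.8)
The plan is to obtain the corollary as an immediate consequence of Theorem~\ref{thm:wbga_conv}. Since the power-type bound $\rho(u) := \rho(E,D,u) \le \gamma u^q$ gives $\rho(u)/u \le \gamma u^{q-1} \to 0$ as $u \to 0$, the function $E$ is uniformly smooth on $D$; together with the assumed boundedness of $D$ this secures all the standing hypotheses of Theorem~\ref{thm:wbga_conv}, in particular the existence of a $\theta_0 \in (0,1]$ with $\rho(1/\theta_0) \ge 1$. It therefore suffices to verify the divergence hypothesis of that theorem, namely that $\sum_{m=1}^\infty t_m \, \xi_m(\rho,\tau,\theta) = \infty$ for every $\theta \in (0,\theta_0]$.

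First I would estimate $\xi_m$ from below. Recall that $\xi_m$ is defined (for $t_m > 0$) as the unique solution of $\rho(u) = \theta t_m u$, equivalently $s(\xi_m) = \theta t_m$, where $s(u) = \rho(u)/u$ is continuous and increasing. Set $a_m := (\theta t_m / \gamma)^{1/(q-1)}$, so that $\gamma a_m^{q-1} = \theta t_m$. Using $\rho(u) \le \gamma u^q$, i.e.\ $s(u) \le \gamma u^{q-1}$, we get $s(a_m) \le \gamma a_m^{q-1} = \theta t_m = s(\xi_m)$, and since $s$ is increasing this forces
\[
	\xi_m \ \ge\ a_m \ =\ \left( \frac{\theta t_m}{\gamma} \right)^{1/(q-1)}.
\]

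Then I would multiply by $t_m$ and sum. Since $1 + \tfrac{1}{q-1} = \tfrac{q}{q-1} = p$,
\[
	\sum_{m=1}^\infty t_m \, \xi_m \ \ge\ \left( \frac{\theta}{\gamma} \right)^{1/(q-1)} \sum_{m=1}^\infty t_m^{\,p} \ =\ \infty
\]
by the hypothesis on $\tau$. Thus the divergence condition of Theorem~\ref{thm:wbga_conv} holds for all admissible $\theta$, and the theorem yields $\lim_{m\to\infty} E(G_m) = \inf_{x\in D} E(x)$, as claimed. I do not anticipate a genuine obstacle: the proof reduces to this one elementary estimate, and the only mildly delicate point is turning the \emph{upper} bound $\rho(u)\le\gamma u^q$ into a \emph{lower} bound for $\xi_m = s^{-1}(\theta t_m)$~--- which works precisely because $s$ is monotone~--- together with the routine check that the standing hypotheses of Theorem~\ref{thm:wbga_conv} are inherited from the power-type assumption and the boundedness of $D$.
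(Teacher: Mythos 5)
Your proof is correct and is precisely the intended derivation: the paper leaves this corollary as an immediate consequence of Theorem~\ref{thm:wbga_conv}, and your estimate $\xi_m \ge (\theta t_m/\gamma)^{1/(q-1)}$ via monotonicity of $s(u)=\rho(u)/u$, giving $t_m\xi_m \gtrsim t_m^p$, is the standard way to verify its divergence hypothesis under the power-type bound. No gaps.
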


We now proceed to the rate of convergence estimates, which are of interest in both finite dimensional and infinite dimensional settings.
A typical assumption in this regard is formulated in terms of the convex hull $\A_1(\D)$ of the dictionary $\D$, defined by~\eqref{eq:A_1(D)}.
\begin{Theorem}\label{thm:wbga_rate}
Let $E$ be a convex function with the modulus of smoothness of power type $1 < q \le 2$, that is, $\rho(E,D,u) \le \gamma u^q$.
Take an element $f^\e \in D$ and a number $\e \ge 0$ such that
\[
	E(f^\e) \le \inf_{x\in D} E(x) + \e, \ \ f^\e/A(\e) \in \A_1(\D)
\]
with some number $A(\e) \ge 1$.
Then for any algorithm from the class $\mathcal{WBGA}$(co) we have
\[
	E(G_m) - \inf_{x\in D} E(x) \le \max\left\{ 2\e, C(q,\gamma) A(\e)^q \left(C(E,q,\gamma) + \sum_{k=1}^m t_k^p\right)^{1-q} \right\},
\]
with $p = q/(q-1)$ and constants $C(q,\gamma)$ and $C(E,q,\gamma)$.
\end{Theorem}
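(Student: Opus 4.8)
Here the target is Theorem~\ref{thm:wbga_rate}. The plan is to track the energy gap $a_m := E(G_m) - \inf_{x\in D}E(x)$. By condition~\ref{wbga_er}, $E(G_m)\le E(G_{m-1}+0\cdot\ff_m)=E(G_{m-1})$, so $\{a_m\}$ is non-increasing (this also keeps $G_m\in D$, which is needed to invoke $\rho(E,D,\cdot)$). If $a_m\le 2\e$ the first entry of the maximum already dominates, so from now on I fix $m$ with $a_m>2\e$; then $a_{k-1}\ge a_m>2\e$ for every $1\le k\le m$, and the shifted quantity $\beta_k:=a_k-\e$ satisfies $\beta_k>\e>0$ throughout $0\le k\le m$.

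The first key step is a one-step energy inequality. For $1\le k\le m$ and $\la\ge0$, combining condition~\ref{wbga_er}, the power-type smoothness $\rho(E,D,u)\le\gamma u^q$ applied with the semi-normalized $\ff_k$, and convexity of $E$ at $G_{k-1}$ in the direction $-\ff_k$, one gets
\[
E(G_k)\ \le\ E(G_{k-1}+\la\ff_k)\ \le\ E(G_{k-1})-\la\<-E'(G_{k-1}),\ff_k\>+2\gamma\la^q .
\]
Next I would bound the functional from below using the remaining two conditions: since $f^\e/A(\e)\in\A_1(\D)$ and the supremum of a continuous linear functional over $\A_1(\D)=\overline{\mathrm{conv}\,\D}$ equals its supremum over $\D$, condition~\ref{wbga_gs} together with the biorthogonality $\<E'(G_{k-1}),G_{k-1}\>=0$ of~\ref{wbga_bo}, convexity~\eqref{eq:E'_conv2}, and $E(f^\e)\le\inf_DE+\e$ yield
\[
\<-E'(G_{k-1}),\ff_k\>\ \ge\ \frac{t_k}{A(\e)}\,\<-E'(G_{k-1}),\,f^\e-G_{k-1}\>\ \ge\ \frac{t_k\,(a_{k-1}-\e)}{A(\e)} .
\]
Substituting this into the previous display and minimizing $a_k\le a_{k-1}-\la t_k(a_{k-1}-\e)/A(\e)+2\gamma\la^q$ over $\la\ge0$ gives, with $p=q/(q-1)$,
\[
a_k\ \le\ a_{k-1}-C(q,\gamma)\,\frac{t_k^{\,p}\,(a_{k-1}-\e)^{\,p}}{A(\e)^{\,p}} .
\]

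Subtracting $\e$ turns this into a closed recursion for $\beta_k$, namely $\beta_k\le\beta_{k-1}-C(q,\gamma)t_k^{\,p}\beta_{k-1}^{\,p}/A(\e)^{\,p}$. The crucial observation is $p(q-1)=q$, so the rescaling $\hat\beta_k:=\beta_k/A(\e)^q$ cancels every $A(\e)$ and produces the scale-free recursion $\hat\beta_k\le\hat\beta_{k-1}-C(q,\gamma)t_k^{\,p}\hat\beta_{k-1}^{\,p}$. Applying the elementary inequality $(1-x)^{1-p}\ge1+(p-1)x$ on $[0,1)$ to $x=C(q,\gamma)t_k^{\,p}\hat\beta_{k-1}^{\,p-1}$ (automatically $<1$ because $\hat\beta_k>0$) and telescoping over $k=1,\dots,m$ gives $\hat\beta_m^{\,1-p}\ge\hat\beta_0^{\,1-p}+(p-1)C(q,\gamma)\sum_{k=1}^m t_k^{\,p}$. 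Since $A(\e)\ge1$ we have $0<\hat\beta_0\le\beta_0\le E(0)-\inf_DE$ (the case $E(0)=\inf_DE$ being trivial), so $\hat\beta_0^{\,1-p}$ is bounded below by a constant $C(E,q,\gamma)$ multiple of $(p-1)C(q,\gamma)$; factoring out $(p-1)C(q,\gamma)$, raising to the power $1-q=1/(1-p)$, and multiplying back by $A(\e)^q$ yields, after renaming constants, $\beta_m\le C(q,\gamma)A(\e)^q\big(C(E,q,\gamma)+\sum_{k=1}^m t_k^{\,p}\big)^{1-q}$. Finally, since $\e<a_m/2<\beta_m$ in this case, $a_m=\beta_m+\e<2\beta_m$, which gives the second entry of the maximum.

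I expect the only delicate point to be bookkeeping in the last paragraph: one must carry the $\e$-shift through the recursion in the variable $\beta_k$ rather than $a_k$, and must use $p(q-1)=q$ together with $A(\e)\ge1$ to be certain that the constant $C(E,q,\gamma)$ absorbed from $\hat\beta_0$ is genuinely independent of $A(\e)$ and $\e$. The one-step energy inequality and the $x\mapsto x^{1-p}$ telescoping are routine, being the same mechanism used throughout the analysis of $\mathcal{WBGA}$-type algorithms.
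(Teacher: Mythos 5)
Your proposal is correct and follows essentially the same route as the paper's proof: a one-step error-reduction inequality obtained from the greedy selection, biorthogonality, convexity, and the power-type modulus of smoothness, followed by optimizing $\la$ and telescoping the resulting recursion (your inline use of $(1-x)^{1-p}\ge 1+(p-1)x$ is exactly the mechanism of the paper's Lemma~\ref{lem:y_k}). The only differences are cosmetic: you track $E(G_k)-\inf_{x\in D}E(x)-\e$ instead of $E(G_k)-E(f^\e)$ and rescale by $A(\e)^q$ to make the recursion scale-free, whereas the paper invokes its Error Reduction Lemma and the telescoping lemma directly.
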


\begin{Corollary}
Let $E$ be a convex function with the modulus of smoothness of power type $1 < q \le 2$, that is, $\rho(E,D,u) \le \gamma u^q$.
If $\operatorname{argmin}_{x \in D} E(x) \in \A_1(\D)$ then for any algorithm from the class $\mathcal{WBGA}$(co) we have
\[
	E(G_m) - \inf_{x \in D} E(x) \le C(q,\gamma) \left(C(E,q,\gamma) + \sum_{k=1}^m t_k^p\right)^{1-q},
\]
with $p = q/(q-1)$ and constants $C(q,\gamma)$ and $C(E,q,\gamma)$.
\end{Corollary}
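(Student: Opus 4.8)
The plan is to obtain this corollary as the special case $\e = 0$ of Theorem~\ref{thm:wbga_rate}. First I would recall that, as observed in the text immediately after the definition of the class $\mathcal{WBGA}$(co), the boundedness of $D$ together with the convexity and Fr\'echet-differentiability of $E$ guarantees that $\operatorname{argmin}_{x \in D} E(x)$ is attained and lies in $D$. Denote this minimizer by $x^*$; the hypothesis of the corollary is precisely that $x^* \in \A_1(\D)$.

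Next I would check that $x^*$ meets the requirements imposed on the element $f^\e$ in Theorem~\ref{thm:wbga_rate} with the choices $\e = 0$ and $A(\e) = 1$. Indeed, $E(x^*) = \inf_{x \in D} E(x) \le \inf_{x \in D} E(x) + 0$, so the first condition holds with $\e = 0$, and $x^*/1 = x^* \in \A_1(\D)$, so the second condition holds with $A(0) = 1 \ge 1$. Hence all hypotheses of Theorem~\ref{thm:wbga_rate} are satisfied for $f^\e := x^*$, with the same exponent $p = q/(q-1)$.

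Finally I would substitute these values into the conclusion of Theorem~\ref{thm:wbga_rate}. Since $1 - q < 0$ and $C(E,q,\gamma) + \sum_{k=1}^m t_k^p > 0$, the quantity $C(q,\gamma)\, 1^q \big(C(E,q,\gamma) + \sum_{k=1}^m t_k^p\big)^{1-q}$ is strictly positive, so the maximum with $2\e = 0$ equals this quantity, which is exactly the asserted bound.

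I do not expect any genuine obstacle here: the corollary is a pure specialization of Theorem~\ref{thm:wbga_rate}. The only point that warrants an explicit word is the attainability of the minimum on $D$ (so that $x^* \in D$ makes sense as a choice of $f^\e$), and this has already been recorded in the discussion preceding the theorem.
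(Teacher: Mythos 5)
Your proposal is correct and is exactly the intended derivation: the corollary is the specialization of Theorem~\ref{thm:wbga_rate} to $f^\e := \operatorname{argmin}_{x \in D} E(x)$, $\e = 0$, $A(0) = 1$, with the maximum collapsing to the second term since $2\e = 0$. The only auxiliary point, attainability of the minimum in $D$, is indeed covered by the discussion following the definition of the class $\mathcal{WBGA}$(co), as you note.
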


\begin{Remark}
While the results stated in this section are known for the WCGA(co) and the WGAFR(co) (see~\cite{VT140}), they are novel for the RWRGA(co).
\end{Remark}

\section{Weak Biorthogonal Greedy Algorithms with errors for Convex Optimization}\label{sec:awbga}
In this section we address the question of the stability of algorithms from the class $\mathcal{WBGA}$(co) by introducing the wider class $\mathcal{WBGA}(\Delta,\text{co})$, which allows for imprecise calculations in the realization of algorithms.
Such an approach is of a practical interest since computational inaccuracies often occur naturally in applications.
To account for imprecise computations we introduce a sequence $\Delta := \{\de_m, \e_m\}_{m=1}^\infty$, where $\de_m \in [0,1]$ and $\e_m \ge 0$ for $m = 1,2,3,\dots$, that represents the allowed inaccuracies in the steps of the algorithms.
In accordance with the conventional notation (see e.g.~\cite{gribonval2001approximate}, \cite{galatenko2003convergence}), we refer to a given sequence $\Delta := \{\de_m, \e_m\}_{m=1}^\infty$ as an {\it error sequence}.

For a convex Fr{\'e}chet-differentiable target function $E : X \to \mathbb{R}$ we define the following class of greedy algorithms with errors.
\\[.5em]\noindent
{\bf Weak Biorthogonal Greedy Algorithms with errors ($\boldsymbol{\mathcal{WBGA}(\Delta,\text{co})}$).\\} 
We say that an algorithm belongs to the class $\mathcal{WBGA}(\Delta,\text{co})$ with a weakness sequence $\tau = \{t_m\}_{m=1}^\infty$, $t_m\in[0,1]$ and an error sequence $\Delta = \{\de_m,\e_m\}_{m=1}^\infty$, $\de_m\in[0,1], \e_m\ge0$, if sequences of approximators $\{G_m\}_{m=0}^\infty$ and selected elements $\{\varphi_m\}_{m=1}^\infty$ of the dictionary $\D$ satisfy the following conditions at every iteration $m \ge 1$:
\begin{enumerate}[label=\bf(\arabic*), leftmargin=.5in]
	\item\label{awbga_gs}
		Greedy selection: ${\displaystyle \<-E'(G_{m-1}), \varphi_m\> \ge t_m \sup_{\varphi\in\D} \<-E'(G_{m-1}), \varphi\>}$;
	\item\label{awbga_er}
		Error reduction: ${\displaystyle E(G_m) \le \inf_{\la\ge0} E(G_{m-1} + \la\ff_m) + \de_m}$;
	\item\label{awbga_bo}
		Biorthogonality: ${\displaystyle |\<E'(G_{m}), G_m\>| \le \e_m}$;${\displaystyle\phantom{\inf_\la}}$
	\item\label{awbga_bd}
		Boundedness: ${\displaystyle E(G_{m}) \le E(0) + C_0}$.
\end{enumerate}

Note that in addition to conditions~\ref{wbga_gs}--\ref{wbga_bo} from the definition of the class $\mathcal{WBGA}$(co), for the $\mathcal{WBGA}(\Delta,\text{co})$ we require the boundedness condition~\ref{awbga_bd} to account for the magnitude of allowed errors $\Delta$.
In particular, if the error sequence $\Delta$ is summable, i.e. $\sum_{m=1}^\infty \de_m < \infty$, then condition~\ref{awbga_bd} follows directly from~\ref{awbga_er} with $C_0 = \sum_{m=1}^\infty \de_m$.

Moreover, we assume that the set
\[
	D \subset D_1 := \{x \in X : E(x) \le E(0) + C_0\} \subset X,
\]
where $C_0$ is the constant from condition~\ref{awbga_bd}, is bounded.
Then condition~\ref{awbga_bd} guarantees that $G_m \in D_1$ for all $m \ge 0$ for any algorithm from the $\mathcal{WBGA}(\Delta,\text{co})$.

\begin{Remark}
In the error reduction condition~\ref{awbga_er} from the definition of the class $\mathcal{WBGA}(\Delta,co$) the infimum is taken over all $\la \ge 0$.
In order to simplify this problem, one can consider a wider than the $\mathcal{WBGA}(\Delta,co)$ class~--- the class $\mathcal{WBGA}(\Delta,[0,1],co)$ of algorithms satisfying conditions~\ref{awbga_gs}, \ref{awbga_bo}, \ref{awbga_bd}, and the following condition instead of~\ref{awbga_er}:
\[
	\text{{\bf (2')} {\rm Restricted error reduction: }}
	E(G_m) \le \inf_{\la\in[0,1]} E(G_{m-1} + \la\ff_m) + \de_m.
\]
Then finding such $\la\in[0,1]$ is a line search problem, which is known to be a simple one-dimensional convex optimization problem (see e.g.~\cite{boyd2004convex}, \cite{N}).
\end{Remark}

\subsection{Examples of algorithms from the $\mathcal{WBGA}(\Delta,co)$}\label{sec:awbga_ga}
In this section we briefly overview particular algorithms from the class $\mathcal{WBGA}(\Delta,\text{co})$ that correspond to the approximate versions of the algorithms considered in Section~\ref{sec:wbga_ga}.
Denote by $\tau := \{t_m\}_{m=1}^\infty$ and $\Delta := \{\de_m,\e_m\}_{m=1}^\infty$ a weakness sequence and an error sequence respectively, i.e. given sequences of numbers $t_m \in [0,1]$, $\de_m \in [0,1]$, and $\e_m \ge 0$ for $m = 1,2,3,\ldots$.

We begin with the Weak Chebyshev Greedy Algorithm with errors for optimization of convex objective functions. 
\\[.5em]\noindent
{\bf Weak Chebyshev Greedy Algorithm with errors (WCGA($\Delta,\text{co}$)).\\}
Set $G^c_0 = 0$ and for each $m \ge 1$ perform the following steps:
\begin{enumerate}
	\item Take any $\varphi^{c}_m \in \D$ satisfying
		\[
			\< -E'(G^c_{m-1}), \varphi^c_m \> \ge t_m \sup_{\varphi\in\D} \< -E'(G^c_{m-1}), \varphi \>;
		\]
	\item Denote $\Phi_m^c = \spn \{\varphi^c_k\}_{k=1}^m$ and find $G_m^c \in \Phi_m^c$ such that
		\[
			E(G_m^c) \le \inf_{G \in \Phi_m^c} E(G) + \de_m.
		\]
\end{enumerate}

Next, we state the Weak Greedy Algorithm with Free Relaxation and errors for optimization of convex objective functions, introduced and studied in~\cite{DT}.
\\[.5em]\noindent
{\bf Weak Greedy Algorithm with Free Relaxation and errors (WGAFR($\Delta,\text{co}$)).\\}
Set $G^f_0 = 0$ and for each $m \ge 1$ perform the following steps:
\begin{enumerate}
	\item Take any $\varphi^f_m \in \D$ satisfying
		\[
			\< -E'(G^f_{m-1}), \varphi^f_m \> \ge t_m \sup_{\varphi\in\D} \< -E'(G^f_{m-1}), \varphi \>;
		\]
	\item Find $\omega_m \in \mathbb{R}$ and $ \lambda_m \in \mathbb{R}$ such that
		\[
			E\big( (1-\omega_m) G^f_{m-1} + \lambda_m \varphi^f_m \big)
			\le \inf_{\la, \omega \in \mathbb{R}} E\big( (1 - \omega) G^f_{m-1} + \la \varphi^f_m \big) + \de_m
		\]
		and define $G^f_m = (1 - \omega_m) G^f_{m-1} + \la_m \varphi^f_m$.
\end{enumerate}

Lastly, we introduce a new algorithm~--- the Rescaled Weak Relaxed Greedy Algorithm with errors for optimization of convex objective functions.
\\[.5em]\noindent
{\bf Rescaled Weak Relaxed Greedy Algorithm with errors (RWRGA($\Delta,\text{co}$)).\\}
Set $G^r_0 = 0$ and for each $m \ge 1$ perform the following steps:
\begin{enumerate}
	\item Take any $\varphi^r_m \in \D$ satisfying
		\[
			\< -E'(G^r_{m-1}), \varphi^r_m \> \ge t_m \sup_{\varphi\in\D} \< -E'(G^r_{m-1}), \varphi \>;
		\]
	\item Find $\lambda_m \ge 0$ such that
		\[
			E(G^r_{m-1} + \la_m \varphi^r_m) \le \inf_{\la \ge 0} E(G^r_{m-1} + \la \varphi^r_m) + \de_m/2;
		\]
	\item Find $\mu_m \in \mathbb{R}$ such that
		\[
			E\big( \mu_m (G^r_{m-1} + \la_m \varphi^r_m) \big)
			\le \inf_{\mu \in \mathbb{R}} E\big( \mu (G^r_{m-1} + \la_m \varphi^r_m) \big) + \de_m/2
		\]
		and define $G^r_m = \mu_m (G^r_{m-1} + \la_m \varphi^r_m)$.
\end{enumerate}

\begin{Proposition}\label{prp:ga_awbga}
The WCGA($\Delta$,co), the WGAFR($\Delta$,co), and the RWRGA($\Delta$,co) belong to the class $\mathcal{WBGA}(\Delta,co)$ with
\[
	\e_m = \inf_{u > 0} \frac{\de_m + 2\rho(E,D_1,u \|G_m\|)}{u}.
\]
\end{Proposition}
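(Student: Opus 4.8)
The plan is to verify, for each of the three algorithms, the four defining conditions~\ref{awbga_gs}--\ref{awbga_bd} of the class $\mathcal{WBGA}(\Delta,\text{co})$. Condition~\ref{awbga_gs} requires nothing: step~1 of the WCGA($\Delta$,co), the WGAFR($\Delta$,co), and the RWRGA($\Delta$,co) is verbatim the greedy selection condition with the prescribed weakness sequence $\tau$. Conditions~\ref{awbga_er} and~\ref{awbga_bd} follow from the definitions by restricting the minimization performed in the last step of each algorithm to an appropriate subfamily: for the WCGA($\Delta$,co) one uses that $G^c_{m-1}+\la\ff^c_m$ and $G^c_{m-1}$ lie in $\Phi^c_m$; for the WGAFR($\Delta$,co) one sets $\omega=0$ (and $\omega=\la=0$); for the RWRGA($\Delta$,co) one chains steps~2 and~3 with $\mu=1$ in step~3, so that the two error halves $\de_m/2$ add up to $\de_m$. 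In each case this also yields $E(G_m)\le E(G_{m-1})+\de_m$, hence $E(G_m)\le E(0)+\sum_{k=1}^m\de_k$, which gives~\ref{awbga_bd} (for instance with $C_0=\sum_{k=1}^\infty\de_k$ when $\Delta$ is summable, as already noted).

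The substance of the proposition is condition~\ref{awbga_bo}, and I would base it on the observation that the final update rule of each of the three algorithms makes $G_m$ a near-minimizer of $E$ not merely along a ray but along the whole line $\{tG_m:t\in\mathbb{R}\}$. Indeed, for any $\la$ the point $(1\pm\la)G_m$ still belongs to the feasible set of the last minimization: it equals $(1\pm\la)G^c_m\in\Phi^c_m$ for the WCGA($\Delta$,co); it equals $(1-\omega')G^f_{m-1}+\la'\ff^f_m$ with $\omega'=1-(1\pm\la)(1-\omega_m)$ and $\la'=(1\pm\la)\la_m$ for the WGAFR($\Delta$,co); and it equals $\mu'(G^r_{m-1}+\la_m\ff^r_m)$ with $\mu'=(1\pm\la)\mu_m$ for the RWRGA($\Delta$,co). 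Consequently $E\big((1\pm\la)G_m\big)\ge E(G_m)-\de_m$ for every $\la>0$ (with $\de_m/2$ in place of $\de_m$ for the RWRGA($\Delta$,co), which is only stronger).

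Next I would convert this line near-optimality into the bound on $\<E'(G_m),G_m\>$. From the definition~\eqref{eq:mod_smt} of the modulus of smoothness together with the convexity inequality~\eqref{eq:E'_conv1} one obtains, for every $x\in D_1$, $\|y\|=1$, and $t>0$,
\[
	E(x+ty)-E(x)-t\<E'(x),y\>\le 2\rho(E,D_1,t),
\]
by writing $E(x+ty)-E(x)=\big(E(x+ty)+E(x-ty)-2E(x)\big)-\big(E(x-ty)-E(x)\big)$, bounding the first bracket by $2\rho(E,D_1,t)$ and the second from below by $-t\<E'(x),y\>$ via convexity. Since $G_m\in D_1$ by~\ref{awbga_bd}, I would apply this with $x=G_m$, $y=\pm G_m/\|G_m\|$ (the case $G_m=0$ being trivial, as then $\e_m=0$), and $t=u\|G_m\|$ for an arbitrary $u>0$; then $x+ty=(1\pm u)G_m$, so the near-optimality $E\big((1\pm u)G_m\big)\ge E(G_m)-\de_m$ gives $\pm u\<E'(G_m),G_m\>\le\de_m+2\rho(E,D_1,u\|G_m\|)$. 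Dividing by $u$ and minimizing over $u>0$ yields $|\<E'(G_m),G_m\>|\le\inf_{u>0}\big(\de_m+2\rho(E,D_1,u\|G_m\|)\big)/u=\e_m$, which is~\ref{awbga_bo}.

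I expect the only mildly delicate point to be the uniform bookkeeping in the second step: recognizing that, despite their different parametrizations, all three algorithms near-minimize $E$ along the full line through $0$ and $G_m$, and keeping track of the harmless factor $\tfrac12$ on $\de_m$ for the RWRGA($\Delta$,co). Once that is in place, the smoothness-plus-convexity computation is routine and identical across the three cases.
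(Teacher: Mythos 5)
Your handling of the greedy selection, error reduction, and biorthogonality conditions is correct and essentially the approach of the paper: the paper also deduces condition~\ref{awbga_bo} from the scalings $(1\pm u)G_m$, combining the modulus-of-smoothness estimate with convexity on one side and with the near-optimality of $G_m$ along the line $\{tG_m : t\in\mathbb{R}\}$ on the other (the near-optimality being exactly your observation that $(1\pm u)G_m$ stays in the feasible set of the last minimization of each algorithm, with the factor $\de_m/2$ for the RWRGA($\Delta$,co)). Your symmetric use of near-optimality in both directions instead of convexity on the $(1+u)$ side is an inessential variation.

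The gap is in condition~\ref{awbga_bd}. Iterating $E(G_m)\le E(G_{m-1})+\de_m$ only yields $E(G_m)\le E(0)+\sum_{k=1}^m\de_k$, which is a uniform bound $E(0)+C_0$ only when $\Delta$ is summable; but the proposition must hold for every admissible error sequence with $\de_m\in[0,1]$, $\e_m\ge0$ (Theorem~\ref{thm:awbga_conv} assumes merely $\de_m\to0$, not summability), so in the general case your argument does not establish~\ref{awbga_bd}. This is not cosmetic: your proof of~\ref{awbga_bo} invokes $\rho(E,D_1,\cdot)$ at $x=G_m$, and membership $G_m\in D_1$ rests on~\ref{awbga_bd} with the constant $C_0$ that defines $D_1$. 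The repair is immediate and already contained in your own parametrizations: the zero element lies in the feasible set of the final minimization of each algorithm (take $0\in\Phi^c_m$ for the WCGA($\Delta$,co); $\omega=1$, $\la=0$ for the WGAFR($\Delta$,co); $\mu=0$ for the RWRGA($\Delta$,co), i.e.\ scale factor $0$ in your formulas), which gives $E(G_m)\le E(0)+\de_m\le E(0)+1$ for every $m$, i.e.\ the paper's unconditional choice $C_0=1$; with this in place the rest of your argument goes through as written.
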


\subsection{Convergence results for the $\mathcal{WBGA}(\Delta,co)$}
In this section we discuss the convergence and rate of convergence results for algorithms from the class $\mathcal{WBGA}(\Delta,\text{co})$.

First, we state the convergence result.
\begin{Theorem}\label{thm:awbga_conv}
Let $E$ be a uniformly smooth on $D_1 \subset X$ convex function.
Assume that an error sequence $\Delta := \{\de_m,\e_m\}_{m=1}^\infty$ is such that $\de_m \to 0$ and $\e_m \to 0$ as $m \to \infty$.
Then any algorithm from the class $\mathcal{WBGA}(\Delta,\text{co})$ with a constant weakness sequence $\tau = t \in (0,1]$ converges, i.e.
\[
	\lim_{m\to\infty} E(G_m) = \inf_{x\in D_1} E(x).
\]
\end{Theorem}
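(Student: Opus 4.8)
The plan is to adapt the proof of Theorem~\ref{thm:wbga_conv} to the presence of errors: carry the per-step inaccuracies $\de_m,\e_m$ through the estimates and, since the sequence $\{E(G_m)\}$ is no longer monotone, replace its eventual monotonicity by a contraction argument valid once $m$ is large. Set $E^*:=\inf_{x\in D_1}E(x)$. Since $0\in D\subset D_1$ and every $x\notin D$ has $E(x)>E(0)\ge\inf_D E$, the quantities $\inf_X E$, $\inf_D E$ and $\inf_{D_1}E$ all coincide and equal $E^*$; in particular $a_m:=E(G_m)-E^*\ge0$ for every $m$, and by the boundedness condition~\ref{awbga_bd} every $G_m$ lies in $D_1$, so the modulus $\rho(u):=\rho(E,D_1,u)$ may be applied at each $G_{m-1}$. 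Finally, since $\spn\D$ is dense in $X$, since $0\in\A_1(\D)$ (symmetry of $\D$), and since any finite combination $\sum c_j g_j$ lies in $\max\{1,\sum|c_j|\}\cdot\A_1(\D)$, continuity of $E$ provides, for every $\e>0$, an element $f^\e\in X$ with $E(f^\e)\le E^*+\e$ and $f^\e/A(\e)\in\A_1(\D)$ for some finite $A(\e)\ge1$. The goal is to show $a_m\to0$.

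First I would establish a recursive inequality. Fix $\e>0$ and $u\ge0$. Combining the error-reduction condition~\ref{awbga_er} (applied to the step $u\varphi_m$), the inequality $E(G_{m-1}+u\varphi_m)\le E(G_{m-1})+u\<E'(G_{m-1}),\varphi_m\>+2\rho(u)$ that follows from the definition~\eqref{eq:mod_smt} of $\rho$ together with convexity (here $\|\varphi_m\|\le1$ and monotonicity of $\rho$ on $[0,\infty)$ let us write $\rho(u)$ in place of $\rho(u\|\varphi_m\|)$), and the greedy-selection condition~\ref{awbga_gs} with $t_m=t$, one obtains
\[
	E(G_m)\le E(G_{m-1})-u\,t\,s_{m-1}+2\rho(u)+\de_m,\qquad s_{m-1}:=\sup_{\varphi\in\D}\<-E'(G_{m-1}),\varphi\>\ \ (\ge0).
\]
On the other hand, convexity in the form~\eqref{eq:E'_conv2} applied to the pair $(G_{m-1},f^\e)$, the biorthogonality condition~\ref{awbga_bo} (which gives $\<E'(G_{m-1}),G_{m-1}\>\le\e_{m-1}$), and the bound $\<-E'(G_{m-1}),f^\e\>\le A(\e)\,s_{m-1}$ (since $f^\e/A(\e)\in\A_1(\D)$ and $s_{m-1}=\sup_{h\in\A_1(\D)}\<-E'(G_{m-1}),h\>$) yield $a_{m-1}-\e\le E(G_{m-1})-E(f^\e)\le A(\e)s_{m-1}+\e_{m-1}$, i.e. $s_{m-1}\ge\big(a_{m-1}-\e-\e_{m-1}\big)/A(\e)$. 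Substituting this into the previous display (which is legitimate for all signs because $u,t\ge0$) gives, for every $u\ge0$,
\[
	a_m\le a_{m-1}-\frac{u\,t}{A(\e)}\big(a_{m-1}-\e-\e_{m-1}\big)+2\rho(u)+\de_m.
\]

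Now fix $\eta>0$. Using $\rho(u)/u\to0$ as $u\to0$, choose and fix $u=u(\e,\eta)\in(0,1)$ so small that $2\rho(u)/u\le t\eta/(2A(\e))$; then $\kappa:=ut/A(\e)\in(0,1)$ and $2\rho(u)\le ut\eta/(2A(\e))$. Since $\e_m\to0$ and $\de_m\to0$, pick $M$ so that $\e_{m-1}\le\eta/2$ and $\de_m\le ut\eta/(2A(\e))$ for all $m\ge M$. Inserting these three bounds into the last display collapses it, for every $m\ge M$, to
\[
	a_m\le(1-\kappa)\,a_{m-1}+\kappa b,\qquad b:=\e+\tfrac32\eta.
\]
Consequently $c_m:=\max\{a_m-b,0\}$ satisfies $c_m\le(1-\kappa)c_{m-1}$ for $m\ge M$, so $c_m\le(1-\kappa)^{m-M}c_M\to0$ and therefore $\limsup_m a_m\le b=\e+\tfrac32\eta$. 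Since $\e>0$ and $\eta>0$ are arbitrary and $a_m\ge0$, we conclude $a_m\to0$, that is, $E(G_m)\to\inf_{x\in D_1}E(x)$.

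The main obstacle is controlling the propagation of the errors. Because the inaccuracies $\de_m$ are assumed only to vanish — not to be summable — the sequence $\{E(G_m)\}$ is not monotone, so the error-free telescoping is unavailable; the crux is to verify that the recursive inequality holds \emph{unconditionally} in $a_{m-1}$ (in particular that substituting the lower bound on $s_{m-1}$ remains valid when $a_{m-1}-\e-\e_{m-1}<0$), so that for large $m$ and small $u$ it becomes a genuine contraction toward a value of order $\e+\eta$, which can then be driven to $0$. A secondary point is the construction of the approximate minimizers $f^\e$ inside a fixed dilate of $\A_1(\D)$, which is where density of the dictionary is used; the reduction from unit-norm directions in~\eqref{eq:mod_smt} to dictionary elements of norm at most one, and the degenerate situation $\varphi_m=0$ (which forces $s_{m-1}=0$ and $a_{m-1}\le\e+\e_{m-1}$, so the recursion persists), are routine.
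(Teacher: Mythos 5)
Your proposal is correct, and its skeleton matches the paper's: a one-step inequality combining the greedy selection, error reduction, approximate biorthogonality, Lemma~\ref{lem:F_A1(D)}, and the smoothness bound of Lemma~\ref{lem:E'_rho}, followed by a limiting argument in which $\e$ is sent to $0$. The inequality you derive, $a_m\le a_{m-1}-ut A^{-1}(a_{m-1}-\e-\e_{m-1})+2\rho(u)+\de_m$ for all $u\ge 0$, is exactly the content of the paper's General Error Reduction Lemma (Lemma~\ref{lem:gerl}) combined with condition~\ref{awbga_er}, which you have in effect reproved inline. Where you genuinely diverge is the last step: the paper feeds the recursion (with the infimum over $\la\in[0,1]$ retained) into the quoted technical Lemma~\ref{lem:a_delta_0} from~\cite{VT148}, whereas you fix a single small $u$, absorb the vanishing errors for $m\ge M$, and obtain the geometric contraction $a_m\le(1-\kappa)a_{m-1}+\kappa b$ with $b=\e+\tfrac32\eta$, giving $\limsup_m a_m\le b$ directly. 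This is a correct, elementary, self-contained substitute for the cited lemma (essentially a proof of the special case of it that is needed); what it costs is nothing here, though the paper's black-box lemma is what also powers the rate result via its quantitative companion Lemma~\ref{lem:a_delta_q}, so the paper's route reuses one mechanism for both theorems. Two further points in your favor: you make explicit the existence of $f^\e$ with $f^\e/A(\e)\in\A_1(\D)$ (which the paper leaves implicit, relying on density of $\spn\D$ and continuity of $E$), and you apply the biorthogonality condition at the correct index ($|\<E'(G_{m-1}),G_{m-1}\>|\le\e_{m-1}$), whereas the paper's application of Lemma~\ref{lem:gerl} writes $\de=\e_m$; this index shift is immaterial since $\e_m\to0$, but your bookkeeping is the cleaner of the two.
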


Second, we provide the rate of convergence estimate.
\begin{Theorem}\label{thm:awbga_rate}
Let $E$ be a convex function with the modulus of smoothness of power type $1 < q \le 2$, that is, $\rho(E,D_{1},u) \le \gamma u^q$.
Take an element $f^\e \in D_1$ and a number $\e \ge 0$ such that
\[
	E(f^\e) \le \inf_{x \in D_1} E(x) + \e, \ \ 
	f^\e/A \in \A_1(\D),
\]
with some number $A := A(\e) \ge 1$.
Then for any algorithm from the class $\mathcal{WBGA}(\Delta,\text{co})$ with a constant weakness sequence $\tau = t \in (0,1]$ and an error sequence $\Delta = \{\de_m,\e_m\}_{m=1}^\infty$ with $\de_m + \e_m \le cm^{-q}$, $m = 1,2,3,\dots$ we have
\[
	E(G_m) - \inf_{x\in D_1} E(x) \le \e + C(E,q,\gamma,t,c) A(\e)^q \, m^{1-q}.
\]
\end{Theorem}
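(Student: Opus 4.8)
The plan is to mirror the proof of the non-error rate estimate (Theorem~\ref{thm:wbga_rate}) while carefully tracking the cumulative effect of the errors $\de_m,\e_m$, and then to invoke the hypothesis $\de_m+\e_m\le cm^{-q}$ together with the constant weakness $t_m\equiv t$ to collapse everything into an $m^{1-q}$ bound. Write $a_m := E(G_m) - \inf_{x\in D_1}E(x) \ge 0$ and set $\bar a := E(f^\e) - \inf_{x\in D_1} E(x) \le \e$. First I would establish the fundamental one-step inequality. Fix $\la\ge0$; by the error reduction condition~\ref{awbga_er}, $E(G_m)\le E(G_{m-1}+\la\ff_m)+\de_m$, and by the definition of the modulus of smoothness and power-type assumption $\rho(E,D_1,u)\le\gamma u^q$,
\[
	E(G_{m-1}+\la\ff_m) \le E(G_{m-1}) + \la\<E'(G_{m-1}),\ff_m\> + 2\gamma\la^q.
\]
Now $\<E'(G_{m-1}),\ff_m\> = -\<-E'(G_{m-1}),\ff_m\> \le -t\sup_{\ff\in\D}\<-E'(G_{m-1}),\ff\>$ by greedy selection~\ref{awbga_gs}. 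To lower-bound that sup, use convexity~\eqref{eq:E'_conv2} and the biorthogonality-with-error condition~\ref{awbga_bo}: since $f^\e/A(\e)\in\A_1(\D)=\overline{\mathrm{conv}\,\D}$, a standard argument gives $\sup_{\ff\in\D}\<-E'(G_{m-1}),\ff\> \ge A(\e)^{-1}\<-E'(G_{m-1}),f^\e\> = A(\e)^{-1}\big(\<E'(G_{m-1}),G_{m-1}\> + \<-E'(G_{m-1}),f^\e - G_{m-1}\>\big) \ge A(\e)^{-1}\big(-\e_{m-1} + E(G_{m-1}) - E(f^\e)\big) \ge A(\e)^{-1}(a_{m-1} - \bar a - \e_{m-1})$. (Here I need the $m{=}1$ case handled separately since $G_0=0$, $\<E'(G_0),G_0\>=0$ exactly, which is fine.) Combining, for every $\la\ge 0$,
\[
	a_m \le a_{m-1} - \frac{t\la}{A(\e)}\big(a_{m-1} - \bar a - \e_{m-1}\big) + 2\gamma\la^q + \de_m.
\]

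The second step is the recursive analysis. Assume inductively (or treat it by a dichotomy) that $a_{m-1}\ge \e + \text{(something)}$, so that $a_{m-1}-\bar a-\e_{m-1}$ stays comparable to $a_{m-1}$; concretely, as long as $a_{m-1} > \e$, one can absorb the $\bar a\le\e$ and the $\e_{m-1}$ terms. Optimize the right-hand side over $\la\ge0$: the optimal $\la$ scales like $\big(t(a_{m-1}-\bar a-\e_{m-1})/(A(\e)^q\gamma q)\big)^{1/(q-1)}$, yielding a bound of the shape
\[
	a_m \le a_{m-1} - C_1\, A(\e)^{-p}\,(a_{m-1}-\bar a-\e_{m-1})^{p} + \de_m,
\]
where $p=q/(q-1)$ so that $1/(q-1)=p-1$ (the exponent on the bracket being $1+1/(q-1)=p$). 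Let $b_m := a_m - \e$. Using $\de_m+\e_{m-1}\le c'm^{-q}$, the recursion becomes essentially $b_m \le b_m' \, \big(1 - C_2 A(\e)^{-p} (b_m')^{p-1}\big) + c' m^{-q}$ with $b_m' := b_{m-1}+\text{error}$. This is exactly the type of recursion that appears in the error-analysis of WCGA/WGAFR in~\cite{DT},~\cite{VT148}; the standard lemma says that a sequence satisfying $b_m \le b_{m-1} - \kappa\,b_{m-1}^{p} + \eta\,m^{-q}$ (with $\eta,\kappa>0$) obeys $b_m \le C(\kappa,\eta,p)\,m^{-(p-1)} = C\,m^{1-q}$, since $p-1 = 1/(q-1)$... wait, I should be careful: actually one gets $m^{-(p-1)}$ with $p-1=1/(q-1)$, and the claimed exponent is $1-q = -(q-1)$; these match only when... they always match once one tracks that the ``clean'' recursion $b_m\le b_{m-1}-\kappa b_{m-1}^{p}$ gives $b_m\lesssim m^{-1/(p-1)} = m^{-(q-1)} = m^{1-q}$, and the perturbation $\eta m^{-q}$ is summable enough (indeed $\sum m^{-q}<\infty$ is false for $q\le1$ but here $q>1$... actually for $1<q\le2$, $\sum m^{-q}$ may diverge when $q\le1$, but $q>1$ here is borderline: $\sum m^{-q}$ converges iff $q>1$, so it converges) not to disturb this rate. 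I would isolate this as a separate numerical lemma and prove it by induction, choosing the constant $C$ large depending on $E,q,\gamma,t,c$ and verifying the induction step $C m^{1-q}\big(1-C_2 A^{-p}(Cm^{1-q})^{p-1}\big)+c'm^{-q}\le C(m+1)^{1-q}$, which reduces to elementary inequalities about $(1+1/m)^{q-1}$.

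The third step is the dichotomy that produces the $\e$ term in the final bound. If at some iteration $a_{m-1}\le\e$ (equivalently $b_{m-1}\le0$), then because the greedy lower bound on $\sup_\ff\<-E'(G_{m-1}),\ff\>$ may go negative, the one-step inequality only gives $a_m\le a_{m-1}+O(m^{-q})\le\e+O(m^{-q})$, and from then on one re-enters the regime and the tail sum $\sum_{k>m-1}c'k^{-q}$ is controlled; this is where the additive $\e$ in ``$E(G_m)-\inf E \le \e + C A(\e)^q m^{1-q}$'' comes from. I expect the main obstacle to be the bookkeeping in this interleaving of the ``above $\e$'' and ``below $\e$'' phases combined with the forward-shifted error terms $\e_{m-1}$ (since biorthogonality at step $m-1$ enters the estimate at step $m$) — making sure no error is double-counted and that the constant $C(E,q,\gamma,t,c)$ genuinely depends only on the listed quantities and not on $\e$ or $m$. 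The factor $A(\e)^q$ (rather than $A(\e)^p$) in the statement comes out after rewriting $C_1 A^{-p}$-type constants through the relation $q=p(q-1)$; I would double-check that exponent at the end against the $\e=0$ Corollary to Theorem~\ref{thm:wbga_rate}, which must be recovered by setting $\de_m=\e_m=\e=0$, $A(\e)=1$.
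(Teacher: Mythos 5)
Your proposal is essentially correct and follows the same strategy as the paper: a one-step error-reduction inequality obtained from conditions \ref{awbga_gs}--\ref{awbga_bd} together with Lemma~\ref{lem:F_A1(D)} and convexity (this is exactly the paper's General Error Reduction Lemma, Lemma~\ref{lem:gerl}), followed by an analysis of the perturbed recursion under $\de_m+\e_m\le cm^{-q}$. The only substantive difference is how the recursion is closed. The paper sets $a_n:=\max\{E(G_n)-E(f^\e),0\}$, observes that with $t\in(0,1]$ and $A\ge1$ the right-hand side of the one-step estimate is nonnegative, so the truncated sequence satisfies
\[
	a_m \le a_{m-1} + \inf_{0\le\la\le1}\big(-\la t A^{-1}a_{m-1}+2\gamma\la^q\big) + \de_m+\e_m,
\]
and then invokes the ready-made Lemma~\ref{lem:a_delta_q} (Lemma~3.3 of \cite{VT148}) with $v=tA^{-1}$, $B=2\gamma$, $\alpha_m=\de_m+\e_m$; its conclusion $a_m\le C'(q,\gamma,a_0,c)\,v^{-q}m^{1-q}=C't^{-q}A^q m^{1-q}$ produces exactly the $A(\e)^q$ factor you were double-checking at the end. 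This truncation removes the difficulty you identified as the main obstacle: your substitution $b_m=a_m-\e$ yields a possibly negative sequence, for which $b_{m-1}^{p}$ is not even defined when $p$ is non-integer and an interleaving of ``above/below $\e$'' phases must be tracked, whereas truncating at $E(f^\e)$ keeps the sequence nonnegative from the start and no dichotomy is needed. Your by-hand induction for the (suitably truncated) recursion does close (the case split $b_{m-1}\lessgtr \tfrac{C}{2}m^{1-q}$ works because $(1-q)p=-q$), so your route amounts to reproving the cited lemma rather than quoting it; also, your use of $\e_{m-1}$ for the biorthogonality error at $G_{m-1}$ is the careful indexing and is harmless under $\de_m+\e_m\le cm^{-q}$, and optimizing $\la$ over $[0,\infty)$ instead of $[0,1]$ is legitimate since the one-step inequality holds for every $\la\ge0$.
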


\begin{Corollary}
Under the conditions of Theorem~\ref{thm:awbga_rate}, specifying 
\[
	A(\e) := \inf \Big\{ A > 0 : \exists f \in D_1 : f/A \in \A_1(\D),\ \ E(f) \le \inf_{x\in D_1}E(x) + \e \Big\}
\]
and denoting
\[
	\eta_m := \inf \big\{ \e > 0: A(\e)^q \, m^{1-q} \le \e \big\},
\]
we obtain for any algorithm from the class $\mathcal{WBGA}(\Delta,co)$
\[
	E(G_m) - \inf_{x\in D_1} E(x) \le C(E,q,\gamma,t) \, \eta_m.
\]
\end{Corollary}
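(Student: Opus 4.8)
The plan is to apply Theorem~\ref{thm:awbga_rate} with a carefully chosen $\e$ for each fixed $m$, and then optimize. Fix $m$. By the definition of $\eta_m$, for any $\e$ with $A(\e)^q m^{1-q} \le \e$ we may (up to an arbitrarily small perturbation) take $\e$ close to $\eta_m$; conversely, by the definition of $A(\e)$, for every $\e' > \eta_m$ there exists $f \in D_1$ with $f/A(\e') \in \A_1(\D)$ and $E(f) \le \inf_{x\in D_1}E(x) + \e'$, so the hypotheses of Theorem~\ref{thm:awbga_rate} are met with $f^\e := f$ and $A := A(\e')$. Substituting into the conclusion of Theorem~\ref{thm:awbga_rate} gives
\[
	E(G_m) - \inf_{x\in D_1} E(x) \le \e' + C(E,q,\gamma,t,c)\, A(\e')^q m^{1-q}.
\]

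The key step is to choose $\e'$ so that both terms on the right are comparable to $\eta_m$. Take $\e'$ slightly larger than $\eta_m$; then $\e' \le \eta_m + o(1)$ as the perturbation shrinks. For the second term, I would use that $A(\e)$ is nonincreasing in $\e$ (a larger error tolerance only enlarges the admissible set), so $A(\e')^q \le A(\eta_m')^q$ for $\e' \ge \eta_m'$ with $\eta_m' < \eta_m$; combined with the defining inequality $A(\eta_m')^q m^{1-q} \le \eta_m'$ (which holds for $\eta_m'$ approaching $\eta_m$ from the side where the infimum is nearly attained, using right-continuity / the infimum structure of $\eta_m$), we get $A(\e')^q m^{1-q} \le \eta_m' \le \eta_m$ in the limit. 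Hence
\[
	E(G_m) - \inf_{x\in D_1} E(x) \le \eta_m + C(E,q,\gamma,t,c)\, \eta_m = \big(1 + C(E,q,\gamma,t,c)\big)\,\eta_m =: C(E,q,\gamma,t)\,\eta_m,
\]
after passing to the limit in the perturbation and absorbing the constant $c$ (which is fixed by the error-sequence hypothesis $\de_m + \e_m \le cm^{-q}$) into the final constant.

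The main obstacle I anticipate is the delicate handling of the infima defining $A(\e)$ and $\eta_m$: neither infimum need be attained, so one must argue via sequences $\e' \downarrow \eta_m$ and $\eta_m' \uparrow \eta_m$ and check that the monotonicity of $A(\cdot)$ together with the bound $A(\eta_m')^q m^{1-q} \le \eta_m'$ survives the limit. A clean way to sidestep this is to observe that for every $\varepsilon > 0$ there is an admissible pair $(f, A)$ with $A^q m^{1-q} \le \eta_m + \varepsilon$ and $E(f) - \inf_{x\in D_1}E(x) \le \eta_m + \varepsilon$ simultaneously — this follows by unwinding the two nested infima — then apply Theorem~\ref{thm:awbga_rate} with this $\e = \eta_m + \varepsilon$ and $A(\e) = A$, obtaining the bound $(\eta_m+\varepsilon) + C(\eta_m+\varepsilon)$, and let $\varepsilon \to 0$. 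The rest is routine.
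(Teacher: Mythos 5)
Your proposal is correct and matches the paper's (implicit) proof: the corollary is immediate from Theorem~\ref{thm:awbga_rate} by applying it with $\e = \eta_m + \varepsilon$ and a near-optimal admissible pair $(f,A)$ obtained by unwinding the two infima defining $A(\cdot)$ and $\eta_m$, then letting $\varepsilon \to 0$ --- exactly your final ``clean'' argument, which also quietly handles the fact that neither infimum need be attained. The only flaw is the intermediate route via $\eta_m' < \eta_m$ with $A(\eta_m')^q m^{1-q} \le \eta_m'$: by definition of the infimum no $\e$ below $\eta_m$ satisfies that inequality (the admissible set is upward-closed, since $A(\cdot)$ is nonincreasing, and is approached from above), so discard that passage and keep the clean version.
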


\begin{Remark}
It follows from the proofs of Theorems~\ref{thm:awbga_conv} and~\ref{thm:awbga_rate}, given in Section~\ref{sec:proofs_awbga}, that the results stated in this section also hold for the class $\mathcal{WBGA}(\Delta,[0,1],\text{co})$.
\end{Remark}

\section{Numerical experiments}\label{sec:numerics}
In this section we demonstrate the performance of the algorithms from the class $\mathcal{WBGA}$(co) that are discussed in Section~\ref{sec:wbga_ga}: the Weak Chebyshev Greedy Algorithm (WCGA(co)), the Weak Greedy Algorithm with Free Relaxation (WGAFR(co)), and the Rescaled Weak Relaxed Greedy Algorithm (RWRGA(co)).

For each of the numerical experiments presented below we consider the Banach space $X = \ell_p^{(\mathrm{dim})}$ of dimensionality $\mathrm{dim}$, a target function $E : X \to \mathbb{R}$, and a dictionary $\D \in X$.
We then employ the aforementioned algorithms to solve the optimization problem~\eqref{eq:opt}, i.e. to find an approximate sparse (with respect to the dictionary $\D$) minimizer
\[
	x^* = \mathop{\operatorname{argmin}}_{x \in X} E(x).
\]
Since greedy algorithms are iterative by design, in Examples~1--2 we obtain and present the trade-off between the sparsity of the solution $x^*$ and the value of $E(x^*)$.
In Examples~3--4, we additionally compare the greedy algorithms for convex optimization with a conventional method of finding sparse solutions~--- the optimization with $\ell_1$-regularization, see~\eqref{eq:opt_reg}.
Specifically, we solve the problem
\[
	\text{find}\ \ x^* = \mathop{\operatorname{argmin}}_{x \in X} \Big( E(x) + \lambda \|x\|_\D \Big),
\]
where $\|\cdot\|_\D$ is the atomic norm with respect to the dictionary $\D$, defined by~\eqref{eq:D_norm}.
To obtain approximate minimizers of different sparsities, the values of the regularization parameter $\lambda$ are taken from the sequence $\{0.1 \times (0.9)^k\}_{k=0}^{49}$, i.e. $50$ regularized optimization problems are solved in every setting.

To avoid an unintentional bias in the selection of the space $X$, the dictionary $\D$, and the target function $E$, we generate those randomly, based on certain parameters that are described in the setting of each example.
In order to provide a reliable demonstration that is independent of a particular random generation, we compute $100$ simulations for each presented example and provide the distribution of the optimization results (shown in Figures~1--4).
In the presented pictures the solid lines represent the geometric mean of the function values for each algorithm and the filled areas represent the minimization distribution across all $100$ simulations.
Finally, to make the results consistent across simulations, we rescale the optimization results to be in the interval $[0,1]$, i.e. instead of reporting the value of $E(x^*)$ we report
\[
	\frac{E(x^*) - \inf_{x \in X} E(x)}{E(0) - \inf_{x \in X} E(x)} \in [0,1].
\]

Numerical experiments presented in this section are performed in Python~3.6 with the use of NumPy and SciPy libraries.
The source code is available at~\url{https://github.com/sukiboo/wbga_co}.

\subsection{Example 1}
\begin{figure}[ht!]
	\includegraphics[width=.49\linewidth]{./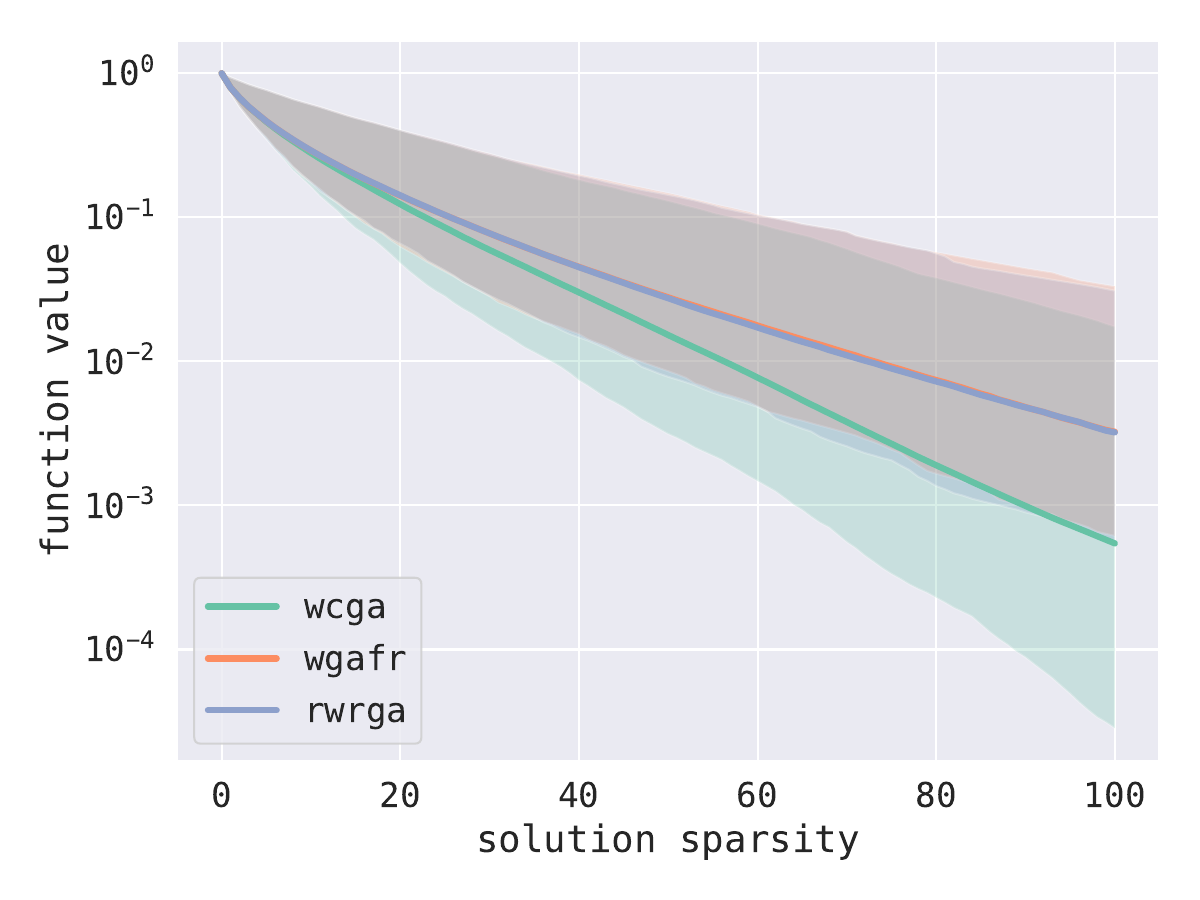}
	\includegraphics[width=.49\linewidth]{./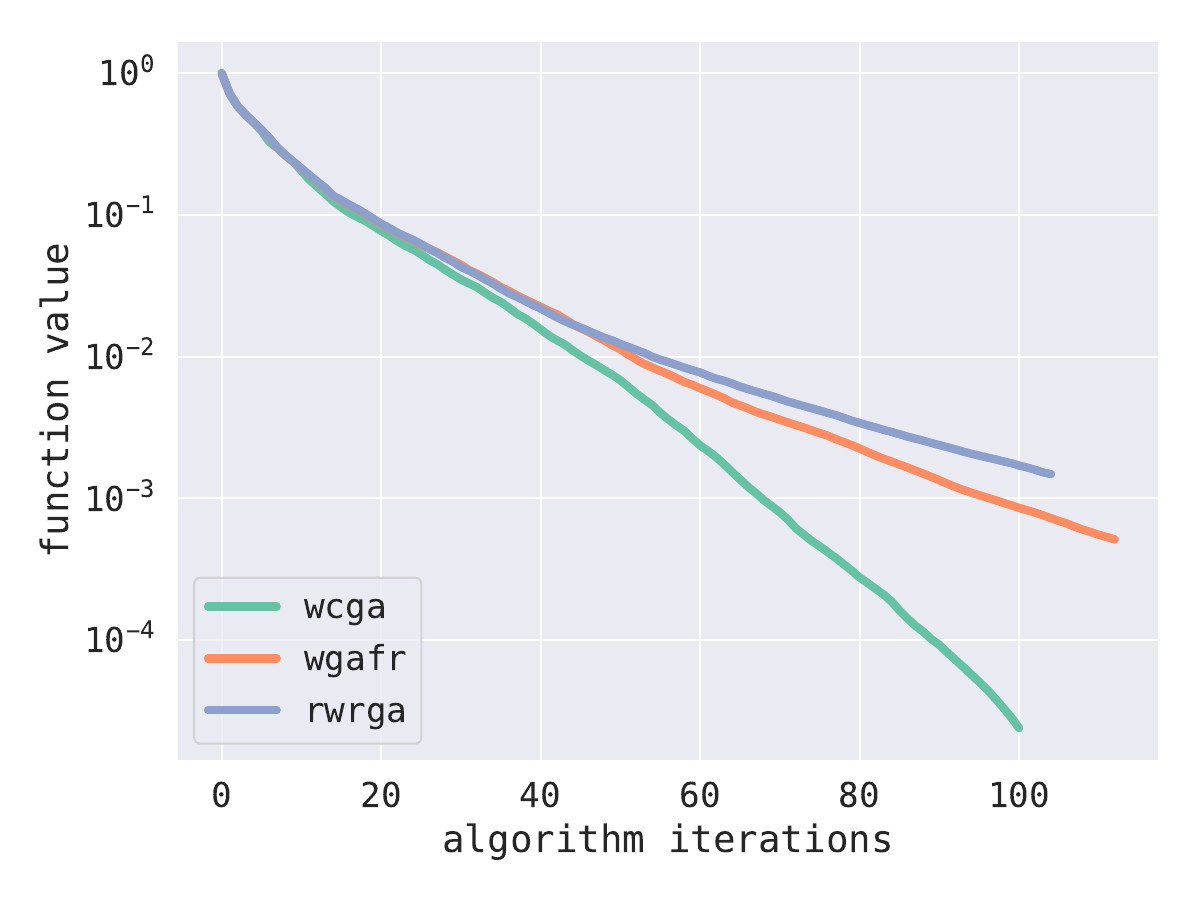}
	\caption{Optimization results for Example~1: function values vs solution sparsity (left) and a particular realization of the minimization process vs the algorithm iterations (right).}
\end{figure}
In this example we consider the space $X = \ell_p^{(1000)}$ with $p \sim \mathcal{U}(1,10)$, and a dictionary $\D$ of size $10000$, constructed as normalized linear combinations of the canonical basis $\{e_i\}_{i=1}^{1000}$ of $X$ with normally distributed coefficients, i.e.
\[
	\D = \{\varphi_j\}_{j=1}^{10000},
	\ \text{where}\ 
	\varphi_j = \frac{\sum_{i=1}^{1000} c^i_j \, e_i}{\left( \sum_{i=1}^{1000} |c_j^i|^p \right)^{1/p}}
	\ \text{with}\ 
	c^i_j \sim \mathcal{N}(0,1).
\]
The target function $E : X \to \mathbb{R}$ is chosen as
\[
	E(x) = \|x - f\|_q^q,
\]
where $q \sim \mathcal{U}(1,10)$ is randomly sampled and $f \in X$ is randomly generated as a linear combination of $K_f \sim \mathcal{U}(100,300)$ randomly selected elements of $\D$ with normally distributed coefficients, i.e.
\begin{gather*}
	f = \sum_{k=1}^{K_f} a_k \, \varphi_{\sigma(k)},
	\ \text{where}\ 
	a_k \sim \mathcal{N}(0,1)
	\\
	\ \text{and}\ 
	\sigma\ \text{is a permutation of}\ \{1, \ldots, 10000\}.
\end{gather*}
We deploy the algorithms to obtain an approximate minimizer of sparsity $100$.
Performance of the greedy algorithms over $100$ simulations in this setting is presented in Figure~1.

\subsection{Example 2}
\begin{figure}[ht!]
	\includegraphics[width=.49\linewidth]{./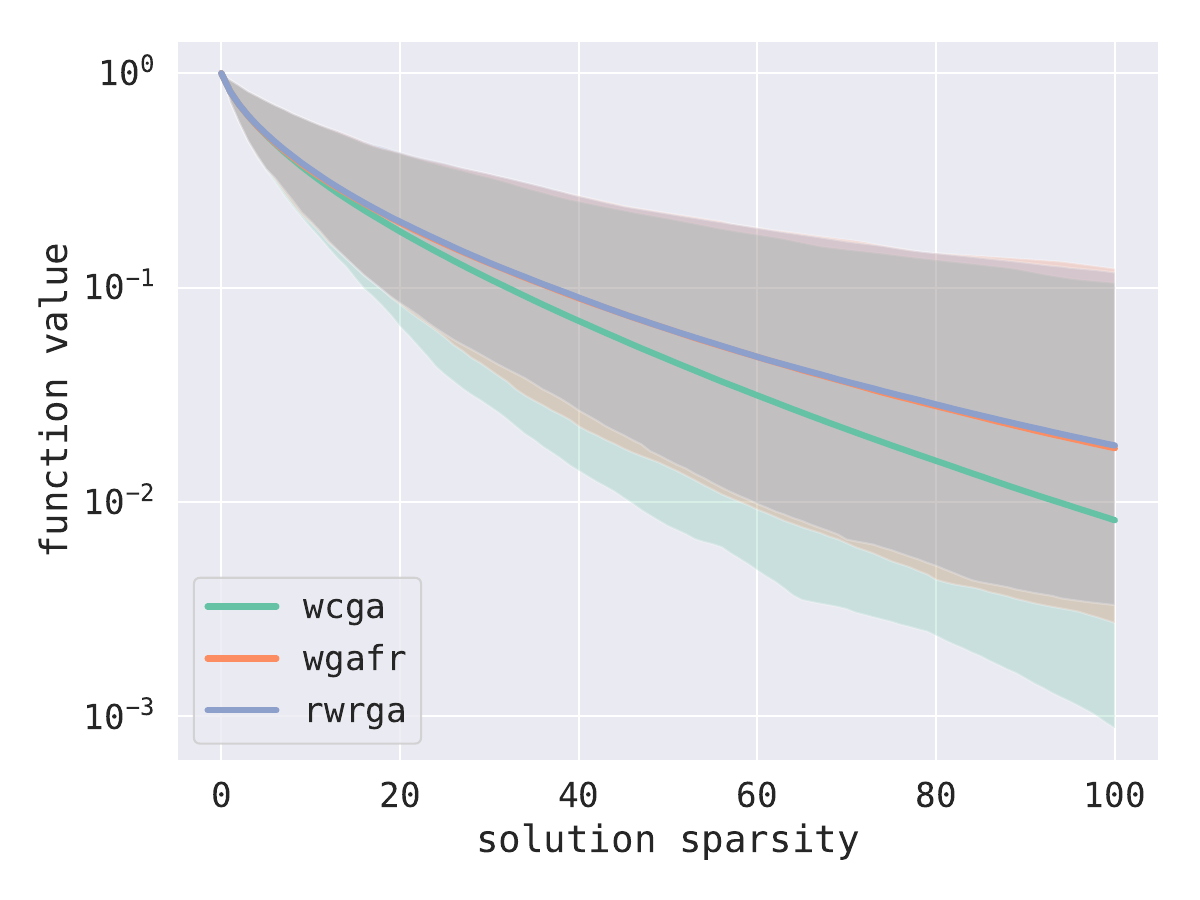}
	\includegraphics[width=.49\linewidth]{./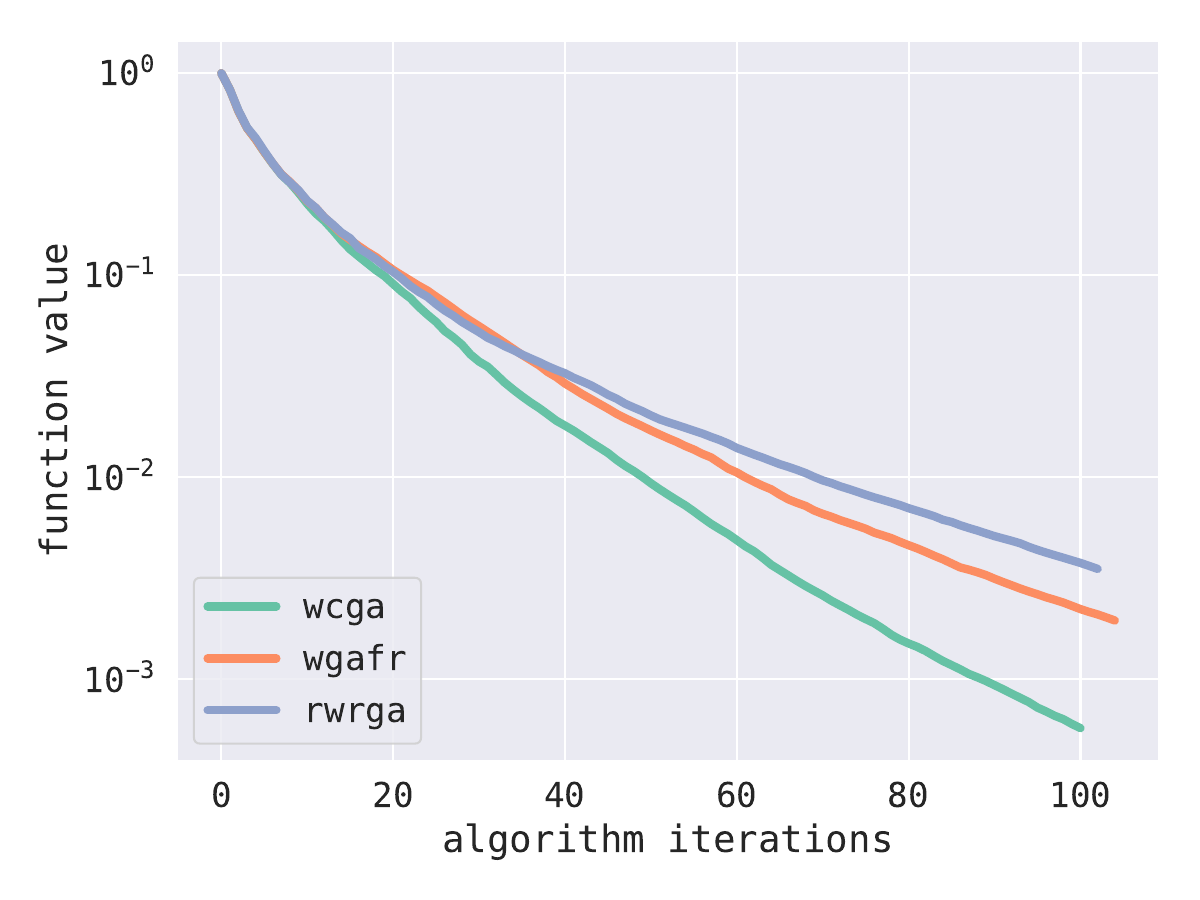}
	\caption{Optimization results for Example~2: function values vs solution sparsity (left) and a particular realization of the minimization process vs the algorithm iterations (right).}
\end{figure}
In this example we consider the space $X = \ell_p^{(1000)}$ with $p \sim \mathcal{U}(1,10)$ and a dictionary $\D$ of size $10000$, constructed as normalized linear combinations of the canonical basis $\{e_i\}_{i=1}^{1000}$ of $X$ with normally distributed coefficients, i.e.
\[
	\D = \{\varphi_j\}_{j=1}^{10000},
	\ \text{where}\ 
	\varphi_j = \frac{\sum_{i=1}^{1000} c^i_j \, e_i}{\left( \sum_{i=1}^{1000} |c_j^i|^p \right)^{1/p}}
	\ \text{with}\ 
	c^i_j \sim \mathcal{N}(0,1).
\]
The target function $E : X \to \mathbb{R}$ is chosen as
\[
	E(x) = \|x - f\|_q^q + \|x - g\|_r^r,
\]
where $q \sim \mathcal{U}(2,10)$ and $r \sim \mathcal{U}(1,2)$, and the elements $f,g \in X$ are each randomly generated as linear combinations of $K_f,K_g \sim \mathcal{U}(200,400)$ randomly selected elements of $\D$ with normally distributed coefficients, i.e.
\begin{gather*}
	f = \sum_{k=1}^{K_f} a^1_k \, \varphi_{\sigma_1(k)}
	\ \text{and}\ 
	g = \sum_{k=1}^{K_g} a^2_k \, \varphi_{\sigma_2(k)},
	\\
	\text{where}\ 
	a^1_k, a^2_k \sim \mathcal{N}(0,1)
	\ \text{and}\ 
	\sigma_1, \sigma_2\ \text{are permutations of}\ \{1,\ldots,10000\}.
\end{gather*}
We deploy the algorithms to obtain an approximate minimizer of sparsity $100$.
Performance of the greedy algorithms over $100$ simulations in this setting is presented in Figure~2.

\subsection{Example 3}
\begin{figure}[ht!]
	\includegraphics[width=.49\linewidth]{./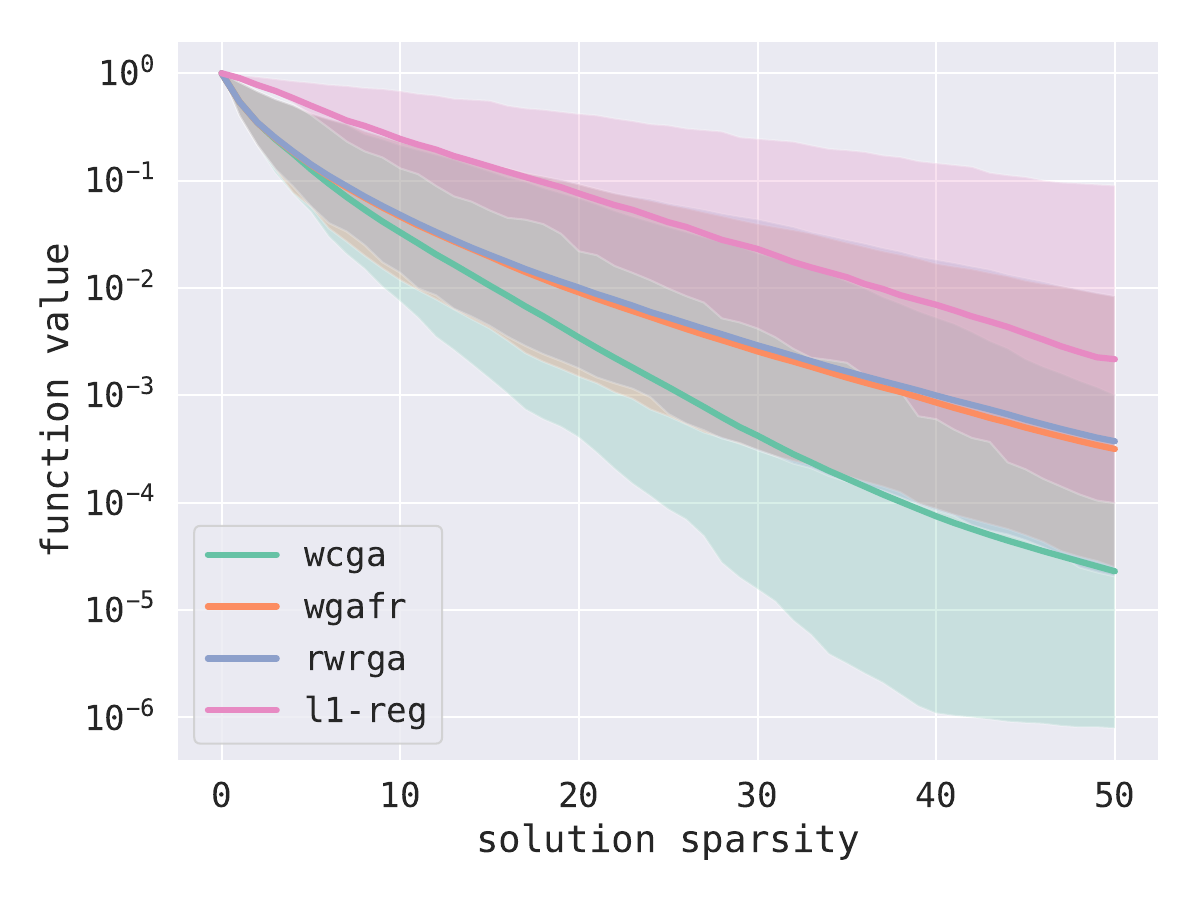}
	\includegraphics[width=.49\linewidth]{./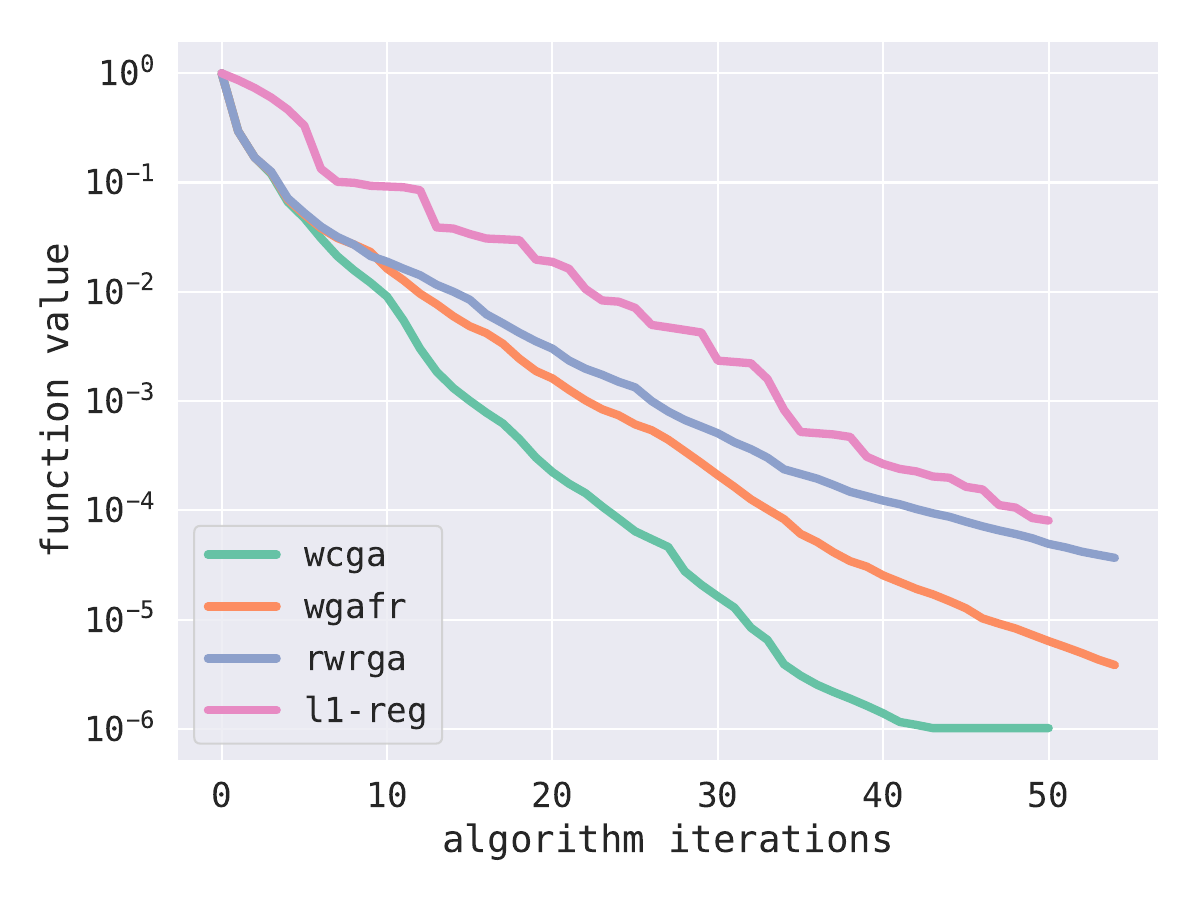}
	\caption{Optimization results for Example~3: function values vs solution sparsity (left) and a particular realization of the minimization process vs the algorithm iterations (right).}
\end{figure}
In this example we additionally compare the greedy algorithms with conventional optimization with $\ell_1$-regularization, see~\eqref{eq:opt_reg}.
Since obtaining the minimization-sparsity trade-off with $\ell_1$-regularization is more expensive computationally than it is for the greedy algorithms, we restrict ourselves to work in a space of smaller dimensionality.
Namely, we consider the space $X = \ell_p^{(100)}$ with $p \sim \mathcal{U}(1,10)$, and a dictionary $\D$ of size $500$, constructed as normalized linear combinations of the canonical basis $\{e_i\}_{i=1}^{100}$ of $X$ with normally distributed coefficients, i.e.
\[
	\D = \{\varphi_j\}_{j=1}^{500},
	\ \text{where}\ 
	\varphi_j = \frac{\sum_{i=1}^{100} c^i_j \, e_i}{\left( \sum_{i=1}^{100} |c_j^i|^p \right)^{1/p}}
	\ \text{with}\ 
	c^i_j \sim \mathcal{N}(0,1).
\]
The target function $E : X \to \mathbb{R}$ is chosen as
\[
	E(x) = \|x - f\|_q^q,
\]
where $q \sim \mathcal{U}(1,10)$ is randomly sampled and $f \in X$ is randomly generated as a linear combination of $K_f \sim \mathcal{U}(50,100)$ randomly selected elements of $\D$ with normally distributed coefficients, i.e.
\begin{gather*}
	f = \sum_{k=1}^{K_f} a_k \, \varphi_{\sigma(k)},
	\ \text{where}\ 
	a_k \sim \mathcal{N}(0,1)
	\\
	\ \text{and}\ 
	\sigma\ \text{is a permutation of}\ \{1, \ldots, 500\}.
\end{gather*}
We deploy the algorithms to obtain an approximate minimizer of sparsity $50$.
Performance of the greedy algorithms and optimization with $\ell_1$-regularization over $100$ simulations in this setting is presented in Figure~3.

\subsection{Example 4}
\begin{figure}[ht!]
	\includegraphics[width=.49\linewidth]{./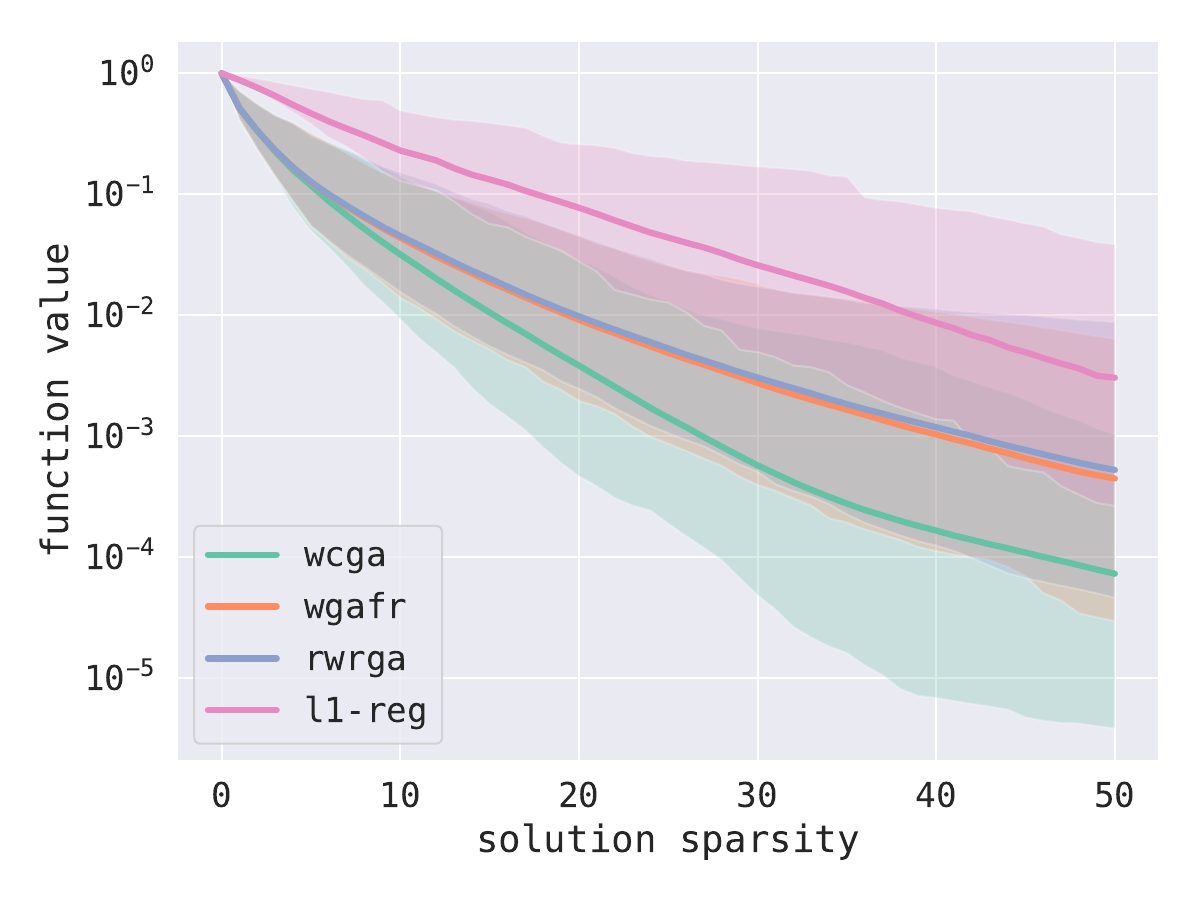}
	\includegraphics[width=.49\linewidth]{./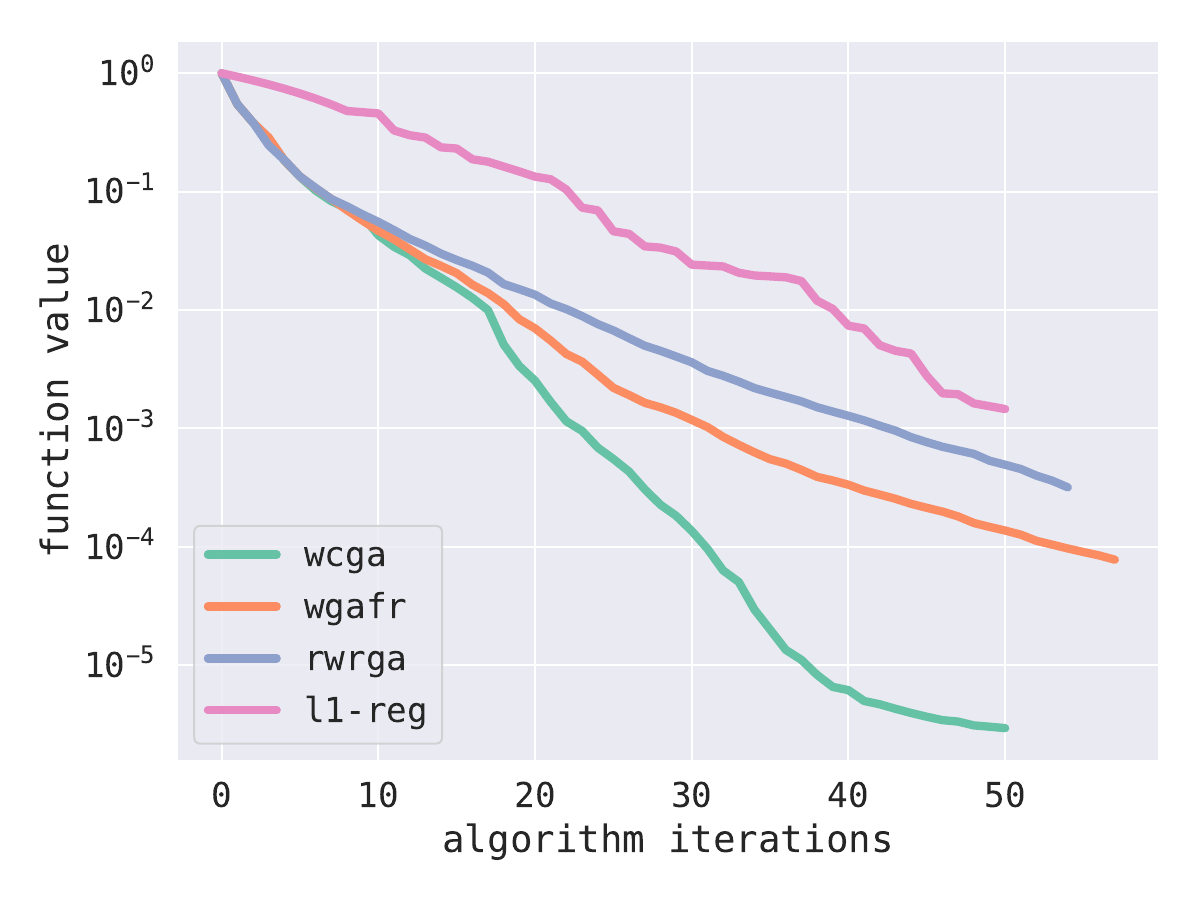}
	\caption{Optimization results for Example~4: function values vs solution sparsity (left) and a particular realization of the minimization process vs the algorithm iterations (right).}
\end{figure}
In this example we consider the space $X = \ell_p^{(100)}$ with $p \sim \mathcal{U}(1,10)$, and a dictionary $\D$ of size $500$, constructed as normalized linear combinations of the canonical basis $\{e_i\}_{i=1}^{100}$ of $X$ with normally distributed coefficients, i.e.
\[
	\D = \{\varphi_j\}_{j=1}^{500},
	\ \text{where}\ 
	\varphi_j = \frac{\sum_{i=1}^{100} c^i_j \, e_i}{\left( \sum_{i=1}^{100} |c_j^i|^p \right)^{1/p}}
	\ \text{with}\ 
	c^i_j \sim \mathcal{N}(0,1).
\]
The target function $E : X \to \mathbb{R}$ is chosen as
\[
	E(x) = \|x - f\|_q^q + \|x - g\|_r^r,
\]
where $q \sim \mathcal{U}(2,10)$ and $r \sim \mathcal{U}(1,2)$, and the elements $f,g \in X$ are each randomly generated as linear combinations of $K_f,K_g \sim \mathcal{U}(100,200)$ randomly selected elements of $\D$ with normally distributed coefficients, i.e.
\begin{gather*}
	f = \sum_{k=1}^{K_f} a^1_k \, \varphi_{\sigma_1(k)}
	\ \text{and}\ 
	g = \sum_{k=1}^{K_g} a^2_k \, \varphi_{\sigma_2(k)},
	\\
	\text{where}\ 
	a^1_k, a^2_k \sim \mathcal{N}(0,1)
	\ \text{and}\ 
	\sigma_1, \sigma_2\ \text{are permutations of}\ \{1,\ldots,500\}.
\end{gather*}
We deploy the algorithms to obtain an approximate minimizer of sparsity $50$.
Performance of the greedy algorithms and optimization with $\ell_1$-regularization over $100$ simulations in this setting is presented in Figure~4.

\section{Proofs for Section~\ref{sec:wbga}}\label{sec:proofs_wbga}
In this section we provide the proofs of the results from Section~\ref{sec:wbga}.
We begin with a known lemma.
\begin{Lemma}[{\cite[Lemma~6.1]{VT140}}]\label{lem:E'_L=0}
Let $E$ be a uniformly smooth Fr{\'e}chet-differentiable convex function on a Banach space $X$ and $L$ be a finite-dimensional subspace of $X$.
Let $x_L$ denote the point from $L$ at which $E$ attains the minimum, i.e.
\[
	x_L = \mathop{\operatorname{argmin}}_{x \in L} E(x) \in L.
\]
Then for any $\phi \in L$ we have
\[
	\<E'(x_L), \phi\> = 0.
\]
\end{Lemma}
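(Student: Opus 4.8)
The plan is to use a standard first-order variational argument: since $x_L$ minimizes the convex function $E$ over the subspace $L$, any directional derivative of $E$ at $x_L$ along a direction lying in $L$ must vanish. Fix an arbitrary $\phi \in L$. Because $L$ is a linear subspace, for every $u \in \mathbb{R}$ the point $x_L + u\phi$ still belongs to $L$, and hence by the defining property of $x_L$ we have $E(x_L + u\phi) \ge E(x_L)$ for all $u$. Thus the one-variable function $h(u) := E(x_L + u\phi)$ has a global minimum at $u = 0$.

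The next step is to differentiate. Since $E$ is assumed Fr\'echet-differentiable, the function $h$ is differentiable at $u = 0$ with $h'(0) = \<E'(x_L), \phi\>$ (this is immediate from the definition of the Fr\'echet derivative, after normalizing $\phi$; if $\phi = 0$ the claim is trivial). A differentiable function attaining a minimum at an interior point of its domain must have vanishing derivative there, so $h'(0) = 0$, which gives $\<E'(x_L), \phi\> = 0$. Since $\phi \in L$ was arbitrary, this proves the lemma.

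There is essentially no serious obstacle here; the argument is routine. The only points deserving a word of care are: first, that $x_L$ exists and is attained in $L$ — this is guaranteed by the hypothesis that $L$ is finite-dimensional together with uniform smoothness (hence continuity) and convexity of $E$ restricted to $D \cap L$, combined with the coercivity coming from boundedness of the sublevel set $D$; second, that the reduction to the scalar function $h$ is legitimate, which uses only that $L$ is closed under the affine map $u \mapsto x_L + u\phi$. The uniform smoothness hypothesis is in fact stronger than needed for this particular statement — mere Fr\'echet-differentiability at $x_L$ suffices — but it is retained because the lemma is applied in a context where uniform smoothness is available anyway.
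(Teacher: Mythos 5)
Your argument is correct: since $L$ is a subspace, $u \mapsto E(x_L + u\phi)$ is a differentiable scalar function minimized at the interior point $u=0$, so its derivative $\langle E'(x_L), \phi\rangle$ vanishes, which is exactly the standard first-order argument behind this statement. Note that the paper itself gives no proof — the lemma is quoted from \cite[Lemma~6.1]{VT140} — so there is nothing to diverge from; your remarks on existence of $x_L$ (which is anyway assumed in the statement) and on uniform smoothness being stronger than needed are accurate but inessential.
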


We now prove that the algorithms stated in Section~\ref{sec:wbga_ga} belong to the class $\mathcal{WBGA}$(co).
\begin{proof}[Proof of Proposition~\ref{prp:ga_wbga}]
It is easy to see that conditions~\ref{wbga_gs} and~\ref{wbga_er} from the definition of the class $\mathcal{WBGA}$(co) are satisfied for all three algorithms.
Condition~\ref{wbga_bo} for any $m \ge 1$ follows directly from Lemma~\ref{lem:E'_L=0} with $x_L = \phi = G_m$ and
\[
	L = \Phi^c_m = \spn\{\varphi_1^c, \ldots, \varphi_m^c\}
\]
for the WCGA(co), and
\[
	L = \spn\{G_{m-1}^f, \varphi_m^f\}
	\ \ \text{or}\ \ 
	L = \spn\{G_{m-1}^r, \varphi_m^r\}
\]
for the WGAFR(co) / RWRGA(co) respectively.
\end{proof}

We proceed by listing the lemmas that will be utilized later in the proofs of the main results.
The following simple lemma is well-known (see, for instance, \cite{VT140}).
For the reader's convenience we present its proof here.
\begin{Lemma}[{\cite[Lemma~6.3]{VT140}}]\label{lem:E'_rho}
Let $E$ be a Fr{\'e}chet-differentiable convex function.
Then the following inequality holds for any $x \in S \subset X$, $y \in X$, and $u \in \mathbb{R}$
\[
	0 \le E(x + uy) - E(x) - u\<E'(x),y\> \le 2\rho(E,S,u\|y\|).
\]
\end{Lemma}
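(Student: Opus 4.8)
The plan is to prove the double inequality
\[
0 \le E(x+uy) - E(x) - u\<E'(x),y\> \le 2\rho(E,S,u\|y\|)
\]
by treating the two bounds separately; each follows quickly from the definitions, so the main effort is just bookkeeping rather than any deep obstacle.

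First I would establish the left inequality. By convexity of $E$ and Fr\'echet-differentiability, the tangent-line inequality \eqref{eq:E'_conv1} gives $E(z) \ge E(x) + \<E'(x), z - x\>$ for all $z \in X$. Applying this with $z = x + uy$ yields $E(x+uy) - E(x) - u\<E'(x),y\> \ge 0$, which is exactly the left-hand bound.

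Next I would handle the right inequality. The idea is to symmetrize: consider the quantity $E(x+uy) + E(x-uy) - 2E(x)$, which by the definition \eqref{eq:mod_smt} of the modulus of smoothness is bounded in absolute value by $2\rho(E,S,u\|y\|)$ — here one writes $uy = (u\|y\|)\cdot(y/\|y\|)$ so that the unit-vector normalization in \eqref{eq:mod_smt} applies with step size $u\|y\|$ (the case $y=0$ being trivial). On the other hand, by the left inequality applied to $x - uy$ (i.e. with $-uy$ in place of $uy$), we have $E(x-uy) - E(x) + u\<E'(x),y\> \ge 0$, so $E(x-uy) - E(x) \ge -u\<E'(x),y\>$. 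Therefore
\[
E(x+uy) - E(x) - u\<E'(x),y\> \le E(x+uy) - E(x) - u\<E'(x),y\> + \big( E(x-uy) - E(x) + u\<E'(x),y\> \big),
\]
and the right-hand side simplifies to $E(x+uy) + E(x-uy) - 2E(x) \le 2\rho(E,S,u\|y\|)$, completing the proof.

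The only point requiring a little care is the reduction from $\|y\|=1$ in the definition of $\rho$ to a general $y\in X$ via the scaling $u \mapsto u\|y\|$; there is no genuine obstacle, and both halves of the estimate are elementary consequences of convexity and the definition of the modulus of smoothness.
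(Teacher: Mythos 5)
Your proof is correct and follows essentially the same route as the paper's: the left bound from the convexity inequality \eqref{eq:E'_conv1}, and the right bound by combining the modulus-of-smoothness estimate for $E(x+uy)+E(x-uy)-2E(x)$ with \eqref{eq:E'_conv1} applied at $x-uy$. Your extra remark about rescaling $uy=(u\|y\|)(y/\|y\|)$ (and the trivial case $y=0$) is a small detail the paper leaves implicit, but the argument is the same.
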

\begin{proof}
The left inequality follows directly from~\eqref{eq:E'_conv1}.
Next, from the definition of modulus of smoothness~\eqref{eq:mod_smt} it follows that
\[
	E(x + uy) + E(x - uy) \le 2\big( E(x) + \rho(E,S,u\|y\|) \big).
\]
From inequality \eqref{eq:E'_conv1} we get
\[
	E(x - uy) \ge E(x) - u\<E'(x),y\>. 
\]
Combining the above two estimates, we obtain
\[
	E(x + uy) \le E(x) + u\<E'(x),y\> + 2\rho(E,S,u\|y\|),
\]
which proves the second inequality. 
\end{proof}

\begin{Lemma}[{\cite[Lemma~6.10]{VTbook}}]\label{lem:F_A1(D)}
For any bounded linear functional $F$ and any dictionary $\D$, we have
\[
	\sup_{g\in \D} \<F,g\> = \sup_{f\in\A_1(\D)} \<F,f\>.
\]
\end{Lemma}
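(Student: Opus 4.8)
The statement to prove is Lemma~\ref{lem:F_A1(D)}: for any bounded linear functional $F$ and any dictionary $\D$,
\[
	\sup_{g\in \D} \<F,g\> = \sup_{f\in\A_1(\D)} \<F,f\>.
\]
The plan is to establish the two inequalities separately, with the nontrivial direction relying only on the definition of $\A_1(\D)$ as the closed convex hull of $\D$ and on the continuity of $F$.

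\textbf{The easy direction.} Since $\D \subseteq \A_1(\D)$, every $g \in \D$ is an admissible element for the supremum on the right-hand side, so $\<F,g\> \le \sup_{f \in \A_1(\D)} \<F,f\>$ for each $g$, and taking the supremum over $g \in \D$ gives $\sup_{g \in \D} \<F,g\> \le \sup_{f \in \A_1(\D)} \<F,f\>$. Denote the left-hand side by $M := \sup_{g \in \D} \<F,g\>$; note $M < \infty$ because $F$ is bounded and the elements of $\D$ have norm at most one.

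\textbf{The main direction.} It remains to show $\sup_{f \in \A_1(\D)} \<F,f\> \le M$. First I would check the bound on the (un-closed) convex hull: any $f \in \operatorname{conv}\D$ has the form $f = \sum_{i=1}^n \alpha_i g_i$ with $g_i \in \D$, $\alpha_i \ge 0$, and $\sum_i \alpha_i = 1$; then by linearity $\<F,f\> = \sum_i \alpha_i \<F,g_i\> \le \sum_i \alpha_i M = M$. Next I would extend this to the closure: take any $f \in \A_1(\D) = \overline{\operatorname{conv}\D}$ and a sequence $f_k \in \operatorname{conv}\D$ with $f_k \to f$ in $X$. Since $F$ is a bounded (hence continuous) linear functional, $\<F,f\> = \lim_{k\to\infty} \<F,f_k\> \le M$. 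Taking the supremum over $f \in \A_1(\D)$ yields $\sup_{f \in \A_1(\D)} \<F,f\> \le M$, which combined with the easy direction gives equality.

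\textbf{Expected obstacle.} There is essentially no real obstacle here; the only point requiring a moment's care is the passage to the closure, where one must invoke the norm-continuity of $F$ (guaranteed by boundedness) to pass the limit through $\<F,\cdot\>$. The rest is a routine application of linearity on finite convex combinations. I would present the argument in the order above: easy inclusion first, then the convex-hull bound, then the closure step.
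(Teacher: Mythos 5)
Your proof is correct and is the standard argument: the inequality $\sup_{g\in\D}\<F,g\>\le\sup_{f\in\A_1(\D)}\<F,f\>$ from the inclusion $\D\subseteq\A_1(\D)$, the reverse bound on finite convex combinations by linearity, and the passage to the closure by norm-continuity of the bounded functional $F$. The paper itself does not reproduce a proof (it cites Lemma~6.10 of Temlyakov's book), and your argument is essentially the same as the cited one, so there is nothing to add.
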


The following lemma is similar to the result from~\cite{T13}.
For the reader's convenience we present a brief proof of this lemma here.
\begin{Lemma}\label{lem:y_k}
Suppose that a sequence $y_1 \ge y_2 \ge y_3 \ge \ldots > 0$ satisfies inequalities
\[
	y_k \le y_{k-1} (1 - w_k y_{k-1}), \ \ w_k \ge 0
\]
for any $k > n$.
Then for any $m > n$ we have
\[
	\frac{1}{y_m} \ge \frac{1}{y_n} + \sum_{k=n+1}^m w_k.
\]
\end{Lemma}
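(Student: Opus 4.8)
The plan is to establish the bound by induction on $m$, using the recursive inequality $y_k \le y_{k-1}(1 - w_k y_{k-1})$ after dividing through by $y_{k-1} y_k$. The natural quantity to track is the reciprocal $1/y_k$, and the single-step estimate we need is
\[
	\frac{1}{y_k} \ge \frac{1}{y_{k-1}} + w_k
	\qquad \text{for } k > n.
\]
Once this is in hand, summing (or iterating the induction) from $k = n+1$ to $k = m$ telescopes immediately to give $1/y_m \ge 1/y_n + \sum_{k=n+1}^m w_k$, which is the claim.

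To prove the single-step estimate, fix $k > n$ and start from $y_k \le y_{k-1}(1 - w_k y_{k-1})$. First I would note that all the $y_j$ are positive by hypothesis, so dividing the inequality by the positive number $y_{k-1} y_k$ is legitimate and yields
\[
	\frac{1}{y_{k-1}} \le \frac{1 - w_k y_{k-1}}{y_k}
	= \frac{1}{y_k} - \frac{w_k y_{k-1}}{y_k}.
\]
Rearranging gives $\frac{1}{y_k} \ge \frac{1}{y_{k-1}} + \frac{w_k y_{k-1}}{y_k}$. Since the sequence is nonincreasing, $y_{k-1} \ge y_k > 0$, hence $y_{k-1}/y_k \ge 1$, and because $w_k \ge 0$ we may replace $w_k y_{k-1}/y_k$ by the smaller quantity $w_k$, obtaining $\frac{1}{y_k} \ge \frac{1}{y_{k-1}} + w_k$ as desired.

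The only point that requires a moment's care is checking that the hypotheses actually guarantee $y_k > 0$ (so that division is valid and $y_{k-1}/y_k \ge 1$ makes sense) — but this is assumed outright in the statement, so there is really no obstacle here; the argument is a short elementary computation. To finish, one either invokes the telescoping sum directly or, equivalently, argues by induction on $m \ge n+1$: the base case $m = n+1$ is exactly the single-step estimate, and the inductive step adds one more application of it. Either way the conclusion $\frac{1}{y_m} \ge \frac{1}{y_n} + \sum_{k=n+1}^m w_k$ follows for every $m > n$.
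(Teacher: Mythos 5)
Your proof is correct and follows essentially the same route as the paper's: both reduce the claim to the one-step reciprocal estimate $1/y_k \ge 1/y_{k-1} + w_k$ and then telescope. The only cosmetic difference is how that step is derived — you divide by $y_{k-1}y_k$ and use $y_{k-1}/y_k \ge 1$, while the paper inverts the recursion and uses $(1 - w_k y_{k-1})^{-1} \ge 1 + w_k y_{k-1}$ — which makes no substantive difference.
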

\begin{proof}
The proof follows directly from the chain of inequalities
\[
	\frac{1}{y_k} \ge \frac{1}{y_{k-1}} (1 - w_k y_{k-1})^{-1}
	\ge \frac{1}{y_{k-1}} (1 + w_k y_{k-1})
	= \frac{1}{y_{k-1}} + w_k.
\]
\end{proof}

The following lemma is our key tool for establishing convergence and rate of convergence of algorithms from the class $\mathcal{WBGA}$(co).
\begin{Lemma}[{{\bf Error Reduction Lemma}}]\label{lem:erl}
Let $E$ be a uniformly smooth on $D \subset X$ convex function with the modulus of smoothness $\rho(E,D,u)$.
Take a number $\e\ge 0$ and an element $f^\e \in D$ such that
\[
	E(f^\e) \le \inf_{x \in X} E(x) + \e, \ \ 
	f^\e / A \in \A_1(\D),
\]
with some number $A := A(\e) \ge 1$.
Then for any algorithm from the class $\mathcal{WBGA}$(co) we have for any $m \ge 1$
\begin{multline*}
	E(G_m) - E(f^\e) \le E(G_{m-1}) - E(f^\e)
	\\
	+ \inf_{\la\ge0} \Big(-\la t_m A^{-1} (E(G_{m-1})-E(f^\e)) + 2\rho(E,D,\la) \Big).
\end{multline*}
\end{Lemma}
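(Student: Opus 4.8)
The plan is to fix $m \ge 1$, write $G := G_{m-1}$ and $\varphi := \varphi_m$ for brevity, and estimate $E(G + \lambda\varphi)$ from above for an arbitrary $\lambda \ge 0$, then invoke the error-reduction property~\ref{wbga_er} which gives $E(G_m) \le E(G + \lambda\varphi)$ for every such $\lambda$. The starting point is Lemma~\ref{lem:E'_rho} applied with $x = G \in D$, $y = \varphi$, $\|y\| \le 1$, and the number $\lambda$: this yields
\[
	E(G + \lambda\varphi) \le E(G) + \lambda \<E'(G), \varphi\> + 2\rho(E,D,\lambda),
\]
using that $\rho(E,D,\cdot)$ is nondecreasing so $\rho(E,D,\lambda\|\varphi\|) \le \rho(E,D,\lambda)$. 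So it remains to bound $\lambda\<E'(G),\varphi\> = -\lambda\<-E'(G),\varphi\>$ from above, i.e.\ to bound $\<-E'(G),\varphi\>$ from below.

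The key step is to link the greedy selection functional $\<-E'(G),\varphi\>$ to the error gap $E(G) - E(f^\e)$. By the greedy selection condition~\ref{wbga_gs},
\[
	\<-E'(G), \varphi\> \ge t_m \sup_{\psi\in\D} \<-E'(G), \psi\>,
\]
and by Lemma~\ref{lem:F_A1(D)} (applied to the bounded linear functional $F = -E'(G)$) the supremum over $\D$ equals the supremum over $\A_1(\D)$. Since $f^\e/A \in \A_1(\D)$, this supremum is at least $\<-E'(G), f^\e/A\> = A^{-1}\<-E'(G), f^\e\>$. Now I use biorthogonality~\ref{wbga_bo} in the previous iteration — actually I want $\<E'(G_{m-1}), G_{m-1}\> = 0$, which is condition~\ref{wbga_bo} at index $m-1$ (and for $m=1$ it holds trivially since $G_0 = 0$). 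Hence $\<-E'(G), f^\e\> = \<-E'(G), f^\e - G\>$, and by the convexity inequality~\eqref{eq:E'_conv2},
\[
	\<-E'(G), f^\e - G\> \ge E(G) - E(f^\e).
\]
Combining, $\<-E'(G),\varphi\> \ge t_m A^{-1}(E(G) - E(f^\e))$.

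Plugging this into the displayed bound for $E(G + \lambda\varphi)$, subtracting $E(f^\e)$ from both sides, and then taking the infimum over $\lambda \ge 0$ gives
\[
	E(G_m) - E(f^\e) \le E(G_{m-1}) - E(f^\e) + \inf_{\lambda\ge0}\Bigl(-\lambda t_m A^{-1}(E(G_{m-1})-E(f^\e)) + 2\rho(E,D,\lambda)\Bigr),
\]
which is exactly the claimed inequality. I should check that the manipulation $\<-E'(G),f^\e\> = \<-E'(G),f^\e - G\>$ is the point that genuinely needs care — it relies on reading condition~\ref{wbga_bo} at the correct index and on handling $m=1$ separately — and also that $f^\e \in D$ together with $E(f^\e) \le \inf_X E + \e$ is consistent with the hypothesis $E(f^\e) \le \inf_X E + \e$ (so that $E(G_{m-1}) - E(f^\e)$ need not be nonnegative, which is fine since the infimum over $\lambda\ge0$ then may be attained at $\lambda=0$). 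The only mild subtlety, and the closest thing to an obstacle, is the correct bookkeeping of the biorthogonality index; the rest is a direct chaining of Lemmas~\ref{lem:E'_rho} and~\ref{lem:F_A1(D)} with~\eqref{eq:E'_conv2}.
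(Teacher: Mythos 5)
Your argument is correct and follows essentially the same route as the paper's proof: Lemma~\ref{lem:E'_rho} for the smoothness bound, the greedy selection condition~\ref{wbga_gs} combined with Lemma~\ref{lem:F_A1(D)} and $f^\e/A \in \A_1(\D)$, the biorthogonality $\<E'(G_{m-1}),G_{m-1}\>=0$ to pass to $f^\e - G_{m-1}$, convexity~\eqref{eq:E'_conv2}, and finally the error reduction condition~\ref{wbga_er} with the infimum over $\la\ge0$. The only cosmetic differences are that the paper separates out the trivial case $E(G_{m-1})-E(f^\e)\le 0$ (which, as you observe, is not really needed since the chained inequality holds for either sign), while you instead make explicit the index bookkeeping for~\ref{wbga_bo} at step $m-1$ and the trivial case $m=1$ with $G_0=0$, which the paper leaves implicit.
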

\begin{proof}
The main idea of the proof is the same as in the proof of the corresponding one-step improvement inequality for the WCGA (see, for instance, \cite[Lemma~6.11]{VTbook}).
It follows from~\ref{wbga_er} of the definition of the class $\mathcal{WBGA}$(co) that
\[
	E(0) \ge E(G_1) \ge E(G_2) \ldots.
\]
Thus if $E(G_{m-1}) - E(f^\e) \le 0$ then the claim of Lemma~\ref{lem:erl} is trivial.
Assuming $E(G_{m-1}) - E(f^\e) > 0$, Lemma~\ref{lem:E'_rho} provides for any $\la \ge 0$
\[
	E(G_{m-1} + \la \varphi_m) \le E(G_{m-1}) - \la \<-E'(G_{m-1}),\varphi_m\> + 2 \rho(E,D,\la)
\]
and by~\ref{wbga_gs} from the definition of the class $\mathcal{WBGA}$(co) and Lemma~\ref{lem:F_A1(D)} we get
\begin{align*}
	\<-E'(G_{m-1}),\varphi_m\> 
	&\ge t_m \sup_{g\in \D} \<-E'(G_{m-1}),g\> 
	\\
	&= t_m\sup_{\phi \in \A_1(\D)} \<-E'(G_{m-1}),\phi\>
	\ge t_m A^{-1} \<-E'(G_{m-1}),f^\e\>.
\end{align*}
By~\ref{wbga_bo} from the definition of the class $\mathcal{WBGA}$(co) and by convexity~\eqref{eq:E'_conv2} we obtain
\[
	\<-E'(G_{m-1}),f^\e\> = \<-E'(G_{m-1}),f^\e-G_{m-1}\> \ge E(G_{m-1})-E(f^\e).
\]
Thus, by~\ref{wbga_er} from the definition of the $\mathcal{WBGA}$(co) we deduce
\begin{align*}
	E(G_m) &\le \inf_{\la\ge0} E(G_{m-1} + \la\varphi_m)
	\\
	&\le E(G_{m-1}) + \inf_{\la\ge0} \Big( -\la t_m A^{-1} (E(G_{m-1}) - E(f^\e)) + 2\rho(E,D,\la) \Big),
\end{align*}
which proves the lemma.
\end{proof}

\begin{proof}[Proof of Theorem~\ref{thm:wbga_conv}]
The error reduction property~\ref{wbga_er} of the class $\mathcal{WBGA}$(co) implies that the sequence of minimizers $\{G_m\}_{m=0}^\infty$ is in $D$ and the sequence $\{E(G_m)\}_{m=0}^\infty$ is non-increasing.
Therefore, we have
\[
	\lim_{m\to \infty} E(G_m) = a \ge \inf_{x\in D}E(x).
\]
Denote
\[
	b := \inf_{x\in D} E(x)
	\ \ \text{and}\ \ 
	\alpha := a - b.
\]
We prove that $\alpha = 0$ by contradiction.
Indeed, assume that $\alpha > 0$.
Then for any $m \ge 0$ we have
\[
	E(G_m) - b \ge \alpha.
\]
We set $\e = \alpha/2$ and find $f^\e \in D$ such that
\[
	E(f^\e) \le b + \e \ \ \text{and}\ \ f^\e/A \in \A_1(\D)
\]
with some $A := A(\e) \ge 1$.
Then by Lemma~\ref{lem:erl} we get
\[
	E(G_m) - E(f^\e) \le E(G_{m-1}) - E(f^\e) + \inf_{\la\ge0} (-\la t_m A^{-1}\alpha/2 + 2\rho(E,D,\la)).
\]
Specify $\theta := \min\left\{ \theta_0,\frac{\alpha}{8A} \right\}$ and take $\la = \xi_m(\rho,\tau,\theta)$ given by~\eqref{eq:theta}.
Then we obtain
\[
	E(G_m) \le E(G_{m-1}) - 2\theta t_m\xi_m.
\]
The assumption
\[
	\sum_{m=1}^\infty t_m\xi_m =\infty
\]
implies a contradiction, which proves the theorem.
\end{proof}

\begin{proof}[Proof of Theorem~\ref{thm:wbga_rate}]
Denote
\[
	a_n := E(G_n) - E(f^\e),
\]
then the sequence $\{a_n\}_{n=0}^\infty$ is non-increasing.
If for some $n \le m$ we have $a_n \le 0$ then $E(G_m) - E(f^\e) \le 0$, which implies
\[
	E(G_m) - \inf_{x \in D} E(x) \le \e,
\]
and hence the statement of the theorem holds.
Thus we assume that $a_n > 0$ for $n \le m$.
By Lemma~\ref{lem:erl} we have
\begin{equation}\label{B3}
	a_m \le a_{m-1} + \inf_{\la\ge0} \left(-\frac{\la t_m a_{m-1}}{B} + 2\gamma \la^q\right).
\end{equation}
Choose $\la$ from the equation
\[
	\frac{\la t_m a_{m-1}}{A} = 4\gamma \la^q,
\]
which implies that
\[
	\la = \left(\frac{ t_m a_{m-1}}{4\gamma A}\right)^{\frac{1}{q-1}} .
\]
Let
\[
	A_q := 2(4\gamma)^{\frac{1}{q-1}}.
\]
Using the notation $p := q/(q-1)$ we get from~\eqref{B3}
\[
	a_m \le a_{m-1}\left(1-\frac{\la t_m}{2A} \right)
	= a_{m-1}\left(1 - \frac{t_m^p}{A_q A^{p}} a_{m-1}^{\frac{1}{q-1}}\right).
\]
Raising both sides of this inequality to the power $1/(q-1)$ and taking into account the inequality $x^r\le x$ for $r\ge 1$, $0\le x\le 1$, we obtain
\[
	a_m^{\frac{1}{q-1}} \le a_{m-1}^{\frac{1}{q-1}} \left(1 - \frac{t^p_m}{A_q A^{p}} a_{m-1}^{\frac{1}{q-1}}\right).
\]
Then Lemma~\ref{lem:y_k} with $y_k := a_k^{\frac{1}{q-1}}$, $n=0$, $w_k=t^p_m/(A_qA^{p})$, which provides
\[
	a_m^{\frac{1}{q-1}} \le C(q,\gamma) A^{p}\left(C(E,q,\gamma) + \sum_{k=1}^m t_k^p\right)^{-1},
\]
that implies
\[
	a_m \le C(q,\gamma) A^q\left(C(E,q,\gamma) + \sum_{k=1}^m t_k^p\right)^{1-q},
\]
which proves the theorem.
\end{proof}

\section{Proofs for Section~\ref{sec:awbga}}\label{sec:proofs_awbga}
In this section we state the proofs for the results from Section~\ref{sec:awbga}.
We begin with the proof that the algorithms stated in Section~\ref{sec:awbga_ga} belong to the class $\mathcal{WBGA}(\Delta,\text{co})$.
\begin{proof}[Proof of Proposition~\ref{prp:ga_awbga}]
It is easy to see that conditions~\ref{awbga_gs} and~\ref{awbga_er} from the definition of the class $\mathcal{WBGA}(\Delta,\text{co})$ are satisfied for all three algorithms.
Condition~\ref{awbga_bd} holds with $C_0 = 1$ since for all three algorithms we have for any $m \ge 1$
\[
	E(G_m) \le E(0) + \de_m \le E(0) + 1.
\]
To guarantee condition~\ref{awbga_bo}, first note that for any $m \ge 1$ and any $u > 0$ the definition of modulus of smoothness~\eqref{eq:mod_smt} provides
\[
	E((1+u) G_m) + E((1-u) G_m) \le 2E(G_m) + 2\rho(E,D_1,u\|G_m\|).
\]
Assume that $\<E'(G_m), G_m\> \ge 0$ (the case $\<E'(G_m), G_m\> < 0$ is handled similarly).
Then from convexity~\eqref{eq:E'_conv1} we get
\[
	E((1+u) G_m) \ge E(G_m) + u \<E'(G_m), G_m\>
\]
and from the definitions of the corresponding algorithms we obtain
\[
	E((1-u) G_m) \ge E(G_m) - \de_m.
\]
Combining the above estimates we deduce
\[
	\<E'(G_m), G_m\> \le \frac{\de_m + 2\rho(E,D_1, u \|G_m\|)}{u}.
\]
Taking infimum over $u > 0$ completes the proof.
\end{proof}

Next, we state necessary technical lemmas that will be utilized in the proof of main results.
\begin{Lemma}[{\cite[Lemma~3.2]{VT148}}]\label{lem:a_delta_0}
Let $\rho(u)$ be a non-negative convex on $[0,1]$ function with the property $\rho(u)/u\to0$ as $u\to 0$.
Assume that a nonnegative sequence $\{\alpha_k\}_{k=1}^\infty$ is such that $\alpha_k\to0$ as $k\to\infty$.
Suppose that a nonnegative sequence $\{a_k\}_{k=0}^\infty$ satisfies the inequalities
\[
	a_m \le a_{m-1} + \inf_{0\le\la\le1}(-\la va_{m-1} + B\rho(\la)) + \alpha_m, \ \ m = 1,2,3,\dots
\]
with positive numbers $v$ and $B$.
Then
\[
	\lim_{m\to\infty} a_m = 0.
\]
\end{Lemma}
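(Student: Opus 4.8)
The statement is a technical analytic lemma about a recursively bounded nonnegative sequence, and the plan is to prove it by contradiction. Suppose $a_m$ does not tend to $0$; since the sequence need not be monotone, I would first extract the relevant structural information from the recursion. Observe that the term $\inf_{0\le\la\le1}(-\la v a_{m-1} + B\rho(\la))$ is always $\le 0$ (take $\la=0$), so $a_m \le a_{m-1} + \alpha_m$. With $\alpha_k\to 0$ this already shows the sequence cannot grow too fast, but to get convergence to $0$ I need to exploit the strictly negative contribution of the infimum whenever $a_{m-1}$ is bounded away from $0$. The key quantitative fact is that, for any fixed $\beta>0$, if $a_{m-1}\ge\beta$ then $\inf_{0\le\la\le1}(-\la v a_{m-1}+B\rho(\la)) \le \inf_{0\le\la\le1}(-\la v\beta + B\rho(\la)) =: -\eta(\beta) < 0$, where $\eta(\beta)>0$ precisely because $\rho(\la)/\la\to 0$ as $\la\to 0$ (so for small $\la$ the linear term dominates). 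This is the analogue, in the present abstract setting, of the role played by $\xi_m$ and equation~\eqref{eq:theta} in the proof of Theorem~\ref{thm:wbga_conv}.

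With $\eta(\beta)$ in hand, I would argue as follows. Let $a := \limsup_{m\to\infty} a_m$; the goal is $a=0$, so assume $a>0$ and set $\beta := a/2$. Choose $N$ large enough that $\alpha_m < \eta(\beta)/2$ for all $m>N$ (possible since $\alpha_m\to 0$), and also large enough that $a_m < a+\beta$ for $m>N$ (possible by definition of $\limsup$). Now run the recursion from index $N$ onward: whenever $a_{m-1}\ge\beta$, we get $a_m \le a_{m-1} - \eta(\beta) + \alpha_m \le a_{m-1} - \eta(\beta)/2$, a fixed decrease; and whenever $a_{m-1}<\beta$, we get $a_m \le a_{m-1}+\alpha_m < \beta + \eta(\beta)/2$. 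The first case cannot occur infinitely often without forcing $a_m$ negative, so eventually $a_{m-1}<\beta$ for all large $m$; once there, each step either keeps the value below $\beta$ or, starting from below $\beta$, rises to at most $\beta + \alpha_m$, and then the decrease mechanism pulls it back. A short bookkeeping argument of this type shows $\limsup_m a_m \le \beta = a/2 < a$, contradicting the definition of $a$. Hence $a=0$, i.e. $a_m\to 0$.

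The main obstacle is handling the non-monotonicity of $\{a_m\}$ cleanly: unlike in Lemma~\ref{lem:erl} and Theorem~\ref{thm:wbga_conv}, where $E(G_m)$ is non-increasing, here the perturbations $\alpha_m$ can make $a_m$ go up, so one cannot simply iterate a one-step decrease. The right way to package this is to prove that for every $\e>0$ there is $M$ with $a_m < \e$ for all $m\ge M$: pick $\e$, apply the above with $\beta$ replaced by a suitable fraction of $\e$, use $\alpha_m\to 0$ to absorb the errors, and track the at-most-one "excursion above $\beta$" that can happen between consecutive returns below $\beta$. One should also note at the outset that, without loss of generality, $\rho$ may be assumed defined and convex on $[0,1]$ with $\rho(0)=0$ (convexity plus $\rho(u)/u\to 0$ forces $\rho(0)\le 0$, and nonnegativity forces $\rho(0)=0$), which is exactly what is needed to guarantee $\eta(\beta)>0$. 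The remaining estimates are elementary and I would not spell them out in detail.
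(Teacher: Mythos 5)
Your proposal has to be judged on its own merits, because the paper does not prove Lemma~\ref{lem:a_delta_0} at all: it is imported verbatim from \cite[Lemma~3.2]{VT148}, so there is no internal argument to compare against. Your strategy --- extract a uniform decrement $\eta(\beta):=-\inf_{0\le\lambda\le1}(-\lambda v\beta+B\rho(\lambda))>0$, valid whenever $a_{m-1}\ge\beta$ (positivity coming precisely from $\rho(\lambda)/\lambda\to0$), and then control the excursions above $\beta$ caused by the perturbations $\alpha_m\to0$ --- is sound and does yield the lemma; it is also the natural self-contained route, analogous to the role of $\xi_m$ in Theorem~\ref{thm:wbga_conv}. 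Two small repairs are needed to make the sketch airtight. First, with the fixed threshold $\alpha_m<\eta(\beta)/2$ your bookkeeping gives only $\limsup_m a_m\le\beta+\eta(\beta)/2$, and since the lemma allows an arbitrary $v>0$ one only knows $\eta(\beta)\le v\beta$, so this bound need not lie below $a=2\beta$; the fix is trivial --- either take the threshold $\min\{\eta(\beta)/2,\beta/2\}$, or let the threshold tend to $0$ (possible since $\alpha_m\to0$), which gives $\limsup_m a_m\le\beta$ exactly as you assert. Better yet, observe that your argument never actually uses the number $a$: for every fixed $\beta>0$ it shows $\limsup_m a_m\le\beta$, so $a_m\to0$ follows directly, with no contradiction and no need to know that $\limsup_m a_m$ is finite (your choice $\beta=a/2$ silently assumes it is). Second, your claim that convexity plus $\rho(u)/u\to0$ forces $\rho(0)\le0$ is not literally correct --- a convex function on $[0,1]$ may jump upward at the endpoint $0$ --- but this is immaterial: the infimum over $\lambda\in(0,1]$ is already $\le0$ because $-\lambda v a+B\rho(\lambda)\to0$ as $\lambda\to0^{+}$, so the value at $\lambda=0$ never enters the argument.
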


\begin{Lemma}[{\cite[Lemma~3.3]{VT148}}]\label{lem:a_delta_q}
Suppose a nonnegative sequence $a_0,a_1,\dots$ satisfies the inequalities for $m = 1,2,3,\dots$
\[
	a_m\le a_{m-1} + \inf_{0\le \la\le 1}(-\la va_{m-1}+B\la^q) + \alpha_m, \ \ \alpha_m \le cm^{-q},
\]
where $q\in (1,2]$, $v\in(0,1]$, and $B > 0$.
Then
\[
	a_m \le C(q,v,B,a_0,c) \, m^{1-q} \le C'(q,B,a_0,c) \, v^{-q} \, m^{1-q}.
\]
\end{Lemma}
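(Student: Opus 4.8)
The plan is to prove $a_m \le C\,m^{1-q}$ by induction on $m$, after first bounding the whole sequence and then collapsing the infimum in the recursion onto a single well-chosen value of $\la$. Boundedness is immediate: taking $\la=0$ gives $a_m \le a_{m-1}+\alpha_m$, hence $a_m \le a_0 + c\sum_{k\ge 1}k^{-q} =: M$, the series converging because $q>1$; in particular $a_m \le M$ for every $m$.

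For the decay, write $p := q/(q-1)$ and note the identity $p(1-q) = -q$: this says that the anticipated rate $m^{1-q}$ is exactly the scale at which a choice $\la \asymp 1/m$ balances the two competing terms $-\la v a_{m-1}$ and $B\la^q$. Accordingly, fix $\beta = \beta(v)$ large enough that $\beta v \ge 4$ (say $\beta = 4/v$), put $m_0 := \lceil\beta\rceil$, and for $m \ge m_0$ substitute $\la = \la_m := \beta/m \in [0,1]$ into the recursion to obtain
\[
	a_m \le a_{m-1}\Bigl(1 - \frac{\beta v}{m}\Bigr) + \frac{B\beta^q + c}{m^q},\qquad m \ge m_0.
\]
The purpose of the condition $\beta v \ge 4$ is the elementary comparison $(m-1)^{1-q}\bigl(1-\tfrac{\beta v}{m}\bigr) \le m^{1-q}\bigl(1-\tfrac{\beta v}{2m}\bigr)$, valid for all $m\ge 2$ and all $q\in(1,2]$: indeed $(m-1)^{1-q} = m^{1-q}(1-1/m)^{-(q-1)} \le m^{1-q}\bigl(1+\tfrac{2}{m}\bigr)$, and then $\bigl(1+\tfrac{2}{m}\bigr)\bigl(1-\tfrac{\beta v}{m}\bigr) \le 1-\tfrac{\beta v}{2m}$ once $\beta v \ge 4$.

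Now run the induction with $C := \max\bigl\{\,M\,m_0^{\,q-1},\ 2(B\beta^q+c)/(\beta v)\,\bigr\}$, a constant of the form $C(q,v,B,a_0,c)$. For $1\le m\le m_0$ the claim holds since $a_m\le M \le C\,m_0^{\,1-q}\le C\,m^{1-q}$, the map $u\mapsto u^{1-q}$ being decreasing. For $m>m_0$, assuming $a_{m-1}\le C(m-1)^{1-q}$, the displayed recursion combined with the comparison yields
\[
	a_m \le C(m-1)^{1-q}\Bigl(1-\frac{\beta v}{m}\Bigr) + \frac{B\beta^q+c}{m^q}
	\le C\,m^{1-q} - \frac{C\beta v/2 - (B\beta^q+c)}{m^q} \le C\,m^{1-q},
\]
the last step by the second entry in the maximum defining $C$. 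This proves the first inequality; the sharper form follows by taking $\beta = 4/v$ and checking that then $\beta^q$, $m_0^{\,q-1}$, and hence $C$ are each bounded by a $(q,B,a_0,c)$-dependent multiple of $v^{-q}$ — this is where the hypothesis $v\le 1$ enters, through $v^{1-q} = v\cdot v^{-q}\le v^{-q}$.

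The argument has no genuinely hard step; the one place requiring attention is the reduction paragraph, namely securing the comparison $(m-1)^{1-q}\bigl(1-\tfrac{\beta v}{m}\bigr)\le m^{1-q}\bigl(1-\tfrac{\beta v}{2m}\bigr)$ for all relevant $m$ with a threshold on $\beta$ depending only on $v$, since it is exactly this that turns an a priori nonlinear recursion into a linear one the induction can absorb. A little bookkeeping is then needed to verify that the constants can be fixed in the order $\beta$, then $m_0$, then $C$, and that all $v$-dependence compresses into the single factor $v^{-q}$.
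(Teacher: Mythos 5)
Your argument is correct and self-contained. Note that the paper itself does not prove this lemma at all: it is imported verbatim from \cite[Lemma~3.3]{VT148}, so there is no in-paper proof to compare against; your induction (boundedness via $\lambda=0$, then the specific choice $\lambda_m=\beta/m$ with $\beta v\ge 4$ to linearize the recursion, then absorbing the error term into $C\,m^{1-q}$) is in the same elementary spirit as the arguments used in \cite{VT148} and \cite{DT} for such recursions, and it correctly tracks the $v$-dependence to get the second bound $C\le C'(q,B,a_0,c)\,v^{-q}$ via $\beta^q=4^qv^{-q}$, $m_0^{q-1}\lesssim v^{1-q}\le v^{-q}$. One small overstatement: the comparison $(m-1)^{1-q}\bigl(1-\tfrac{\beta v}{m}\bigr)\le m^{1-q}\bigl(1-\tfrac{\beta v}{2m}\bigr)$ is asserted ``for all $m\ge2$'', but your derivation multiplies the bound $(m-1)^{1-q}\le m^{1-q}(1+\tfrac2m)$ by the factor $1-\tfrac{\beta v}{m}$, which is only legitimate when that factor is nonnegative; since in the induction you use it exclusively for $m>m_0\ge\beta$, where $\beta v/m<v\le1$, the application is sound and the proof stands as written --- just restrict the stated range of the comparison (or argue the negative-factor case separately) for full precision.
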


Lastly, we establish a generalized version of Lemma~\ref{lem:erl}.
\begin{Lemma}[{{\bf General Error Reduction Lemma}}]\label{lem:gerl}
Let $E$ be a uniformly smooth on $S \subset X$ convex function with the modulus of smoothness $\rho(E,S,u)$.
Take a number $\e \ge 0$ and an element $f^\e \in S$ such that
\[
	E(f^\e) \le \inf_{x\in X} E(x) + \e, \ \ 
	f^\e/B \in \A_1(\D),
\]
with some number $B \ge 1$.
Suppose that $G \in S$ and $\ff \in \D$ satisfy the following conditions
\begin{gather}
	\label{C1}
	\<-E'(G),\ff\> \ge \theta \sup_{g\in \D} \<-E'(G),g\>, \ \ \theta \in (0,1];
	\\
	\label{C2}
	|\<E'(G),G\>| \le \de, \ \ \de \in [0,1].
\end{gather}
Then we have
\begin{align*}
	\inf_{0\le\la\le1} E(G+\la\ff)
	&\le E(G)
	\\
	&+ \inf_{0\le\la\le1} (-\la \theta B^{-1} (E(G) - E(f^\e)) + 2\rho(E,S,\la)) + \de.
\end{align*}
\end{Lemma}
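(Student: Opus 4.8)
The plan is to follow the structure of the proof of Lemma~\ref{lem:erl} (the Error Reduction Lemma) almost verbatim, replacing exact conditions by their approximate counterparts~\eqref{C1} and~\eqref{C2}. First I would dispose of the trivial case: if $E(G) - E(f^\e) \le 0$ the asserted inequality holds immediately, since the infimum over $\la \in [0,1]$ of $-\la\theta B^{-1}(E(G)-E(f^\e)) + 2\rho(E,S,\la)$ is attained (in the worst case) at $\la = 0$ giving $0$, and $\de \ge 0$. So assume $E(G) - E(f^\e) > 0$.

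Next I would apply Lemma~\ref{lem:E'_rho} with $x = G$, $y = \ff$, $u = \la \in [0,1]$, and $\|\ff\| \le 1$, to obtain
\[
	E(G + \la\ff) \le E(G) - \la\<-E'(G),\ff\> + 2\rho(E,S,\la).
\]
Then I would chain the three lower bounds on $\<-E'(G),\ff\>$: by~\eqref{C1} and Lemma~\ref{lem:F_A1(D)},
\[
	\<-E'(G),\ff\> \ge \theta\sup_{g\in\D}\<-E'(G),g\> = \theta\sup_{\phi\in\A_1(\D)}\<-E'(G),\phi\> \ge \theta B^{-1}\<-E'(G),f^\e\>,
\]
using $f^\e/B \in \A_1(\D)$. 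For the last step I would write $\<-E'(G),f^\e\> = \<-E'(G),f^\e - G\> + \<-E'(G),G\>$, bound the first term below by $E(G) - E(f^\e)$ via convexity~\eqref{eq:E'_conv2}, and bound $\<-E'(G),G\> \ge -|\<E'(G),G\>| \ge -\de$ by~\eqref{C2}. This gives $\<-E'(G),f^\e\> \ge E(G) - E(f^\e) - \de$, hence
\[
	\<-E'(G),\ff\> \ge \theta B^{-1}\big(E(G) - E(f^\e) - \de\big).
\]

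Substituting back and taking the infimum over $\la \in [0,1]$ yields
\[
	\inf_{0\le\la\le1} E(G+\la\ff) \le E(G) + \inf_{0\le\la\le1}\Big(-\la\theta B^{-1}\big(E(G)-E(f^\e)\big) + 2\rho(E,S,\la)\Big) + \la^*\theta B^{-1}\de,
\]
where $\la^*$ is the (or any) minimizing $\la$; since $\la^* \le 1$, $\theta \le 1$, and $B^{-1} \le 1$, the extra term $\la^*\theta B^{-1}\de$ is bounded by $\de$, which produces exactly the claimed bound. The one point requiring a little care is the handling of the error term in the last display: one must make sure the infimum is not destroyed by the $\de$-perturbation, which is why the clean way is to first derive the per-$\la$ inequality $E(G+\la\ff) \le E(G) - \la\theta B^{-1}(E(G)-E(f^\e)) + 2\rho(E,S,\la) + \la\theta B^{-1}\de$, bound $\la\theta B^{-1}\de \le \de$ pointwise in $\la$, and only then take the infimum. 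I do not expect any serious obstacle here; the main thing is bookkeeping the direction of the inequalities and the sign of $\<E'(G),G\>$ — and the statement already reduces the two-sided bound in~\eqref{C2} to the one-sided estimate we actually need, so the case split present in the proof of Proposition~\ref{prp:ga_awbga} is not needed here.
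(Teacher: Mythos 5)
Your proposal is correct and follows essentially the same route as the paper's proof: the trivial case $E(G)-E(f^\e)\le 0$, then Lemma~\ref{lem:E'_rho}, condition~\eqref{C1} with Lemma~\ref{lem:F_A1(D)}, the decomposition $\<-E'(G),f^\e\> = \<-E'(G),f^\e-G\> + \<-E'(G),G\>$ bounded via convexity~\eqref{eq:E'_conv2} and~\eqref{C2}, and finally the infimum over $\la\in[0,1]$. Your explicit bookkeeping of the error term, bounding $\la\theta B^{-1}\de\le\de$ pointwise in $\la$ before taking the infimum, is exactly the step the paper performs implicitly, so there is no gap.
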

\begin{proof}
If $E(G_{m-1}) - E(f^\e) \le 0$ then the claim of Lemma~\ref{lem:gerl} is trivial.
Assuming $E(G_{m-1}) - E(f^\e) > 0$, Lemma~\ref{lem:E'_rho} provides for any $\la \ge 0$
\[
	E(G + \la \varphi) \le E(G) - \la \<-E'(G),\varphi\> + 2 \rho(E,S,\la).
\]
By~\eqref{C1} and Lemma~\ref{lem:F_A1(D)} we get
\begin{align*}
	\<-E'(G),\varphi\>
	&\ge \theta \sup_{g \in \D} \<-E'(G),g\>
	\\
	&= \theta\sup_{\phi \in \A_1(\D)} \<-E'(G),\phi\> \ge \theta B^{-1} \<-E'(G),f^\e\>.
\end{align*}
By~\eqref{C2} and by convexity~\eqref{eq:E'_conv2} we obtain
\[
	\<-E'(G),f^\e\> = \<-E'(G),f^\e-G\> + \<-E'(G),G\> \ge E(G)-E(f^\e)-\de.
\]
Thus
\[
	E(G + \la\ff) \le E(G) - \la \theta B^{-1} (E(G) - E(f^\e)) + 2\rho(E,S,\la)) + \de,
\]
which proves the lemma.
\end{proof}

\begin{proof}[Proof of Theorem~\ref{thm:awbga_conv}]
Assumption~\ref{awbga_bd} from the definition of the class $\mathcal{WBGA}(\Delta,\text{co})$ implies that for any $m \ge 0$
\[
	E(G_m) \le E(0) + C_0
	\ \ \text{and}\ \
	G_m \in D_1.
\]
Then from Lemma~\ref{lem:gerl} with $S = D_1$, $G = G_{m-1}$, $\ff = \ff_m$, $\de = \e_m$, $\theta = t$, $B = A(\e)$ and property~\ref{awbga_er} from the definition of the class $\mathcal{WBGA}(\Delta,\text{co})$ we obtain
\begin{align}\nonumber
	E(G_m)
	&\le \inf_{0\le\la\le1} E(G_{m-1} + \la\varphi_m) + \de_m
	\\\nonumber
	&\le E(G_{m-1}) + \inf_{0\le\la\le 1} \big(-\la t A^{-1} (E(G_{m-1}) - E(f^\e))
	\\\label{eq:E(G_m)}
	&\phantom{E(G_{m-1}) + \inf_{0\le\la\le 1} \big(-\la}
	+ 2\rho(E,D_1,\la) \big) + \de_m + \e_m.
\end{align}
Denote
\[
	a_n := \max\big\{ E(G_n) - E(f^\e), 0 \big\}.
\]
Note that under our assumptions $t \in (0,1]$ and $A := A(\e) \ge 1$ we always have
\[
	a_{m-1} + \inf_{0\le\la\le 1}(-\la t A^{-1} a_{m-1} + 2\rho(E,D_1,\la)) \ge 0.
\]
Therefore estimate~\eqref{eq:E(G_m)} implies
\begin{equation}\label{eq:a_m}
	a_m \le a_{m-1} + \inf_{0\le\la\le 1} (-\la t A^{-1} a_{m-1} + 2\rho(E,D_1,\la)) + \de_m + \e_m.
\end{equation}
We apply Lemma~\ref{lem:a_delta_0} with $v = tA^{-1}$, $B = 2$, $\rho(u) = \rho(E,D_1,\la)$, and $\alpha_m = \de_m + \e_m$ to obtain
\[
	\lim_{m\to\infty} a_m = 0,
\]
which implies
\[
	\limsup_{m\to\infty} E(G_m) \le \e + \inf_{x\in D_1} E(x)
\]
and, due to the arbitrary nature of choice of $\e > 0$,
\[
	\lim_{m\to\infty} E(G_m) = \inf_{x\in D_1} E(x),
\]
which completes the proof.
\end{proof}

\begin{proof}[Proof of Theorem~\ref{thm:awbga_rate}]
From estimate~\eqref{eq:a_m} we get
\begin{align*}
	a_m
	&\le a_{m-1} + \inf_{0\le\la\le 1} (-\la t A^{-1} a_{m-1} + 2\rho(E,D_1,\la)) + \de_m + \e_m
	\\
	&\le a_{m-1} + \inf_{0\le\la\le 1}(-\la t A^{-1} a_{m-1} + 2\gamma\la^q) + \de_m + \e_m.
\end{align*}
Applying Lemma~\ref{lem:a_delta_q} with $v = t A^{-1}$, $B = 2\gamma$, and $\alpha_m = \de_m + \e_m$ completes the proof.
\end{proof}

\section*{Acknowledgments}
The first author acknowledges support given by the Oak Ridge National Laboratory, which is operated by UT-Battelle, LLC., for the U.S. Department of Energy under Contract DE-AC05-00OR22725.

The work was supported by the Russian Federation Government Grant N{\textsuperscript{\underline{o}}}14.W03.31.0031.

\section*{References}
\bibliographystyle{amsplain}

\end{document}